\documentclass[11pt]{article}
\usepackage{amssymb,amsfonts,amsmath,latexsym,epsf,tikz,url}
\usepackage{epsfig}
\usepackage[space]{grffile} 
\usepackage{etoolbox} 
\usepackage{float}
\usepackage{soul}
\usepackage[colorinlistoftodos]{todonotes}
\usepackage{mathrsfs}
\usepackage[colorlinks]{hyperref}
\usetikzlibrary{arrows}
\makeatletter 
\patchcmd\Gread@eps{\@inputcheck#1 }{\@inputcheck"#1"\relax}{}{}
\makeatother

\newtheorem{theorem}{Theorem}[section]

\newtheorem{proposition}[theorem]{Proposition}
\newtheorem{observation}[theorem]{Observation}
\newtheorem{corollary}[theorem]{Corollary}
\newtheorem{lemma}[theorem]{Lemma}

\newtheorem{definition}[theorem]{Definition}

\newcommand{\qed}{\hfill $\square$\medskip}

\textwidth 14.5cm
\textheight 21.0cm
\oddsidemargin 0.4cm
\evensidemargin 0.4cm
\voffset -1cm

\begin{document}

\def\nt{\noindent}

\title{{\bf Strong coalitions in graphs}}

\bigskip 
\author{\small
H. Golmohammadi$^{1,2}$,  
S. Alikhani$^{3}$,
 N. Ghanbari$^{3}$,
I.I. Takhonov$^{1}$,
A. Abaturov$^{1}$
}


\maketitle

\begin{center}

$^{1}$Novosibirsk State University, Pirogova str. 2, Novosibirsk, 630090, Russia\\ 

\medskip
$^{2}$Sobolev Institute of Mathematics, Ak. Koptyug av. 4, Novosibirsk,
630090, Russia\\

\medskip
$^3$Department of Mathematical Sciences, Yazd University, 89195-741, Yazd, Iran\\

	\medskip
	{\tt h.golmohammadi@g.nsu.ru ~~ alikhani@yazd.ac.ir ~~ n.ghanbari.math@gmail.com ~~i.takhonov@g.nsu.ru ~~a.abaturov@g.nsu.ru }

\end{center}

\begin{abstract}
   For a graph $G=(V,E)$,  a set $D\subset V(G)$ is a strong dominating set of $G$, if for every vertex $x\in V (G)\setminus D$
   there is a vertex $y\in D$ with $xy \in E(G)$ and $deg(x)\leq deg(y)$.  A strong coalition consists of two disjoint sets of vertices $V_{1}$ and $V_{2}$, neither of which is a strong dominating set but whose union $V_{1}\cup V_{2}$, is a strong dominating set. A vertex partition $\Omega=\{V_1, V_2,..., V_k \}$ of vertices in $G$ is a strong coalition partition, if every set $V_i \in\Omega$ either
is a strong dominating set consisting of a single vertex of degree $n-1$, or is not a strong dominating set but produces a strong coalition with another set $V_j \in \Omega$ that is not a strong dominating set. The maximum cardinality of a strong coalition partition of $G$ is the strong
coalition number of $G$ and is denoted by $SC(G)$. In this paper, we study properties of strong coalitions in graphs.
\end{abstract}

\noindent{\bf Keywords:}   Coalition, strong dominating set, strong coalition.

\medskip
\noindent{\bf AMS Subj.\ Class.:}  05C60.


\section{Introduction}

 We begin with some basic definitions. Let $G$ be a
graph with vertex set $V=V(G)$, edge set $E=E(G)$, and order $n=|V|$. The minimum and
maximum degrees in $G$ are denoted by $\delta(G)$ and $\Delta(G)$, respectively. A universal vertex in a graph $G$ is a vertex adjacent to all other vertices, and so $deg(v)=n-1$. Domination is an extensively studied classic topic in graph theory and the literature on this subject has been thoroughly reviewed
in the books \cite{14,15}. Given a graph $G=(V,E)$, recall that a dominating set $S$ of a graph $G$ is a set such that every vertex in $V \setminus S$ is adjacent to some vertex in $S$. The domination number $\gamma(G)$ is the minimum cardinality of a dominating set. A set $D\subset V$ is a strong dominating set of $G$, if for every vertex $x\in V\setminus D$
there is a vertex $y\in D$ with $xy \in E(G)$ and $deg(x)\leq deg(y)$. The strong domination number $\gamma_{st}(G)$ is defined as the minimum cardinality of a strong dominating
set. The strong domination number was introduced in \cite{17} inspired by the real-life application to network traffic, and has subsequently been studied in \cite{1,5,8,16,18}. A domatic partition (or strong partition) is a partition
of the vertex set into dominating sets (or strong dominating sets). The maximum cardinality of a domatic partition (or strong partition) is called the domatic number (or strong domatic number), denoted by $d(G)$ (or $d_{st}(G)$). The domatic number of a graph was introduced in 1977 by  Cockayne and  Hedetniemi \cite{7}. Very recently, the strong domatic number of a graph has
started to be explored in \cite{9}. For more details on the domatic number refer to e.g., \cite{19,20}. 

 A coalition in a graph $G=(V,E)$ consists of two disjoint sets $V_1$ and $V_2$ of vertices, such that neither $V_1$ nor $V_2$ is a dominating set, but the union $V_1\cup V_2$
is a dominating set of $G$. A coalition partition in a graph $G$ of order $n=|V |$ is
a vertex partition $\mathcal{P}=\{V_1, V_2, . . . , V_k\}$ such that every set $V_i$ either is a dominating
set consisting of a single vertex of degree $n-1$, or is not a dominating set but
forms a coalition with another set $V_j$ which is not a dominating set. The maximum cardinality of a coalition partition
is the coalition number of a graph $G$, and is denoted by $C(G)$. In 2020, Hedetniemi et al. \cite{10} introduced and studied the concept of
coalition in graphs and have been
investigated in \cite{6,11,12,13}. For some recent papers on some variants of
coalitions in graphs see \cite{2,3,4}. In order to expand the study of coalitions, we  consider coalitions involving strong dominating sets in graphs. 

\medskip 
 This paper is organized as follows. In Section 2, we investigate the existence of strong
 coalition partitions. In Section 3, we present some bounds on the strong coalition number. In Section 4, we give a sufficient condition for graphs $G$ of order at least 4 with $SC(G)=|V(G)|$. In Section 5, we obtain the strong coalition
number of some graphs. Finally, we conclude the paper in Section 6. 
  \section{ Existence of strong coalition partitions}
  
We start this section with two main definitions.

 \begin{definition}[Strong coalition]
Two sets $V_1,V_2\subseteq V(G)$ form a strong coalition in a graph $G$ if they are not strong dominating sets but their union is a strong dominating set in $G$. 
 \end{definition} 
 
\begin{definition}[Strong coalition partition]\label{2.2} 
A strong coalition partition of a graph $G=(V,E)$ is a partition $\Omega=\{V_1,V_2,\ldots,V_k\}$ of the vertex set $V$ such that any $V_i\in \Omega, 1\leq i \leq k,$ is either a strong dominating set comprising a single vertex of degree $n-1$, or is not a strong dominating set but forms a strong coalition with another set $V_j \in \Omega$ that is not a strong dominating set. The maximum number of sets in $\Omega$ gives the strong coalition number $SC(G)$ of $G$. 
\end{definition}

The main goal of this section is to find out a necessary  condition for the existence of a strong coalition partition in a graph $G$. To achieve this goal, we shall now construct families of graphs have no strong coalition partitions. Let the family of graphs ${\cal F}$ contains all graphs $G\neq K_n$ of order $n$ with at least one vertex of degree $n-1$. The next result provides a necessary condition for the existence of a strong coalition partition.

\begin{proposition}~\label{1}
    Let $G$ be a graph. If $G\in {\cal F}$, then $SC(G)=0$.
\end{proposition}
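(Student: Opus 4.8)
The plan is to argue by contradiction: assume $G\in{\cal F}$ admits a strong coalition partition $\Omega=\{V_1,\ldots,V_k\}$, and derive a contradiction from the presence of a universal vertex together with the hypothesis $G\neq K_n$. Fix a vertex $v$ with $deg(v)=n-1$. First I would record two easy facts. Since $deg(v)=n-1=\Delta(G)$, the singleton $\{v\}$ is already a strong dominating set: every $x\neq v$ is adjacent to $v$ and satisfies $deg(x)\leq n-1=deg(v)$. Moreover, any superset of a strong dominating set is again a strong dominating set, because a vertex outside the larger set is outside the smaller one and is still strongly dominated by the same witness. Consequently, whichever part $V_i\in\Omega$ contains $v$ is a strong dominating set, and by Definition~\ref{2.2} such a part must consist of a single vertex of degree $n-1$; hence $V_i=\{v\}$. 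More generally, every part of $\Omega$ that contains a vertex of degree $n-1$ is forced to be a singleton of that vertex, since it is a strong dominating set and the only admissible strong-dominating parts are such singletons.

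The crux of the argument is the following observation about strongly dominating a universal vertex from the outside: if $D\subseteq V(G)$ with $v\notin D$ strongly dominates $v$, then $D$ must contain a vertex of degree $n-1$. Indeed, strong domination of $v$ requires some $y\in D$ with $vy\in E(G)$ and $deg(v)\leq deg(y)$; since $deg(v)=n-1$ is the maximum possible degree in $G$, this forces $deg(y)=n-1$. Combining this with the previous paragraph: a set $V_j$ participating in a strong coalition must, by definition, not be a strong dominating set, and its coalition partner $V_\ell$ must likewise not be a strong dominating set; since every part containing a degree-$(n-1)$ vertex is a strong-dominating singleton, neither $V_j$ nor $V_\ell$ can contain a vertex of degree $n-1$. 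Therefore $V_j\cup V_\ell$ contains no vertex of degree $n-1$, and as $v$ lies in the separate part $\{v\}$ we have $v\notin V_j\cup V_\ell$. By the crux observation, $V_j\cup V_\ell$ then fails to strongly dominate $v$ and so is not a strong dominating set. This shows that no two non-strong-dominating parts of $\Omega$ can ever form a strong coalition.

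To finish, I would use $G\neq K_n$ to exhibit a part that is obligated to form a coalition and thereby reach a contradiction. Since $G\neq K_n$, not every vertex has degree $n-1$, so there is a vertex $w$ with $deg(w)<n-1$. The part $V_j\ni w$ is not a singleton consisting of a degree-$(n-1)$ vertex, so by the definition of a strong coalition partition it must instead be a set that is not strongly dominating but forms a strong coalition with another non-strong-dominating part. The previous paragraph shows this is impossible. This contradiction establishes that $G$ has no strong coalition partition, i.e.\ $SC(G)=0$. The only genuinely delicate point, and the step I would state most carefully, is the crux observation that a universal vertex can be strongly dominated from outside only by another universal vertex; the remainder is bookkeeping built directly on the definitions of strong domination and of a strong coalition partition.
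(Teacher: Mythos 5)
Your proof is correct and follows essentially the same route as the paper's: the universal vertex $x$ forces any part containing a degree-$(n-1)$ vertex to be a strong-dominating singleton, so no coalition's union can contain a neighbor of $x$ of degree $\geq deg(x)$, and hence no strong coalition can exist, contradicting (via $G\neq K_n$) the existence of a part that needs a partner. In fact your write-up is more careful than the paper's, explicitly justifying the steps (supersets of strong dominating sets are strong dominating, and the use of $G\neq K_n$ to produce a part obligated to form a coalition) that the paper leaves implicit.
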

\begin{proof} Suppose to contrary that $G$ has a $SC(G)$-partition $\Omega$. Since $G\in {\cal F}$, so $G$ contains at least one vertex of degree $n-1$ such as $x$. So it must be a singleton set of $\Omega$. Now, we may assume that each of $V_i \in\Omega$ and $V_j \in\Omega$ contains at least two vertices, while neither of $V_i$ nor $V_j$ is a singleton strong dominating set but $V_i \cup V_j$ can produce a strong coalition. It holds that there are vertices such as $u \in V_i$ and $v\in V_j$ such that $xu \in E(G)$ and $xv \in E(G)$. However, $deg(u)<deg(x)$ and $deg(v)<deg(x)$, a contradiction. Therefore  $G$ has no strong coalition partition and so  $SC(G)=0$.\qed

\end{proof}
    
As an immediate consequence of Proposition \ref{1}, we have the following observation.
 \begin{observation}
    
	\begin{enumerate} 
 
		\item[(i)] 
		If $G$ is a star graph $K_{1,n}$ where $n \ge 2$, then  $SC(G)=0$.
		
		\item[(ii)] If $F_n$ is a friendship graph, then $SC(G)=0$.
		 \end{enumerate} 
\end{observation}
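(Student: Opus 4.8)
The plan is to obtain both parts as immediate corollaries of Proposition \ref{1}, by checking that each of the two graphs belongs to the family $\mathcal{F}$. Recall that $\mathcal{F}$ consists of all graphs $G \neq K_n$ of order $n$ having at least one vertex of degree $n-1$; so for each graph I would simply exhibit a universal vertex and confirm that the graph is not complete, after which Proposition \ref{1} forces $SC(G)=0$.

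For part (i), I would take the star $K_{1,n}$ with $n \geq 2$, whose order is $n+1$. The center is adjacent to all $n$ leaves, so it has degree $n=(n+1)-1$, one less than the order; thus the universal-vertex condition holds. Since the $n \geq 2$ leaves are pairwise non-adjacent, $K_{1,n}$ is not complete, so $K_{1,n} \neq K_{n+1}$. Hence $K_{1,n} \in \mathcal{F}$, and Proposition \ref{1} yields $SC(K_{1,n})=0$.

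For part (ii), I would recall that the friendship graph $F_n$ (with $n \geq 2$) is formed from $n$ triangles sharing one common vertex, and therefore has order $2n+1$. The common vertex is adjacent to each of the other $2n$ vertices, so it has degree $2n=(2n+1)-1$, again one less than the order. Every non-central vertex has degree $2$, so $F_n$ is not complete and $F_n \neq K_{2n+1}$. Thus $F_n \in \mathcal{F}$, and Proposition \ref{1} gives $SC(F_n)=0$.

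The closest thing to an obstacle is handling the degenerate boundary cases, which is exactly why the restriction $n \geq 2$ matters: for $n=1$ one has $K_{1,1}=K_2$ and $F_1=K_3$, both complete graphs that lie outside $\mathcal{F}$, so Proposition \ref{1} would not apply there. Apart from watching this boundary, the argument is a direct verification of the two membership conditions and requires no further work.
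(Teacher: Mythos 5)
Your proposal is correct and follows exactly the paper's intended argument: the paper states this observation as an immediate consequence of Proposition~\ref{1}, and your verification that the star's center and the friendship graph's hub are universal vertices in non-complete graphs is precisely the membership check in $\mathcal{F}$ that this entails. Your remark about the boundary case $F_1=K_3$ is a worthwhile extra precision, since the paper's statement of (ii) silently assumes the friendship graph is not complete.
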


\medskip

\section{Bounds on the strong coalition number}

In this section, we obtain some bounds for the strong coalition number. First we establish a relation between the strong coalition number $SC(G)$ and the strong domatic number $d_{st}(G)$ as follows.
\begin{theorem}\label{e1}
If $G$ is a graph of order $n$ contains no universal vertex, then $SC(G) \ge 2d_{st}(G)$.

\end{theorem}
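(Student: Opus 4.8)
The plan is to refine a maximum strong partition of $G$ into a strong coalition partition, at least doubling the number of classes. Let $\{D_1,\dots,D_t\}$ be a partition of $V(G)$ into strong dominating sets with $t=d_{st}(G)$. Since $G$ has no universal vertex, a single vertex cannot strongly dominate $G$ (it cannot even dominate it), so $|D_i|\ge 2$ for every $i$. I would then split each $D_i$ into two nonempty parts, neither of which is a strong dominating set; because the union of the two parts is $D_i$, which \emph{is} a strong dominating set, the two parts automatically form a strong coalition. Assembling all these parts would yield a strong coalition partition with $2t$ classes, giving $SC(G)\ge 2t=2d_{st}(G)$.

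The main obstacle is that a strong dominating set need not admit such a two-part split: it can happen that $D_i$ is ``redundant'', meaning that in every partition of $D_i$ into two nonempty parts at least one part is already strongly dominating (for instance when all of its $(|D_i|-1)$-subsets strongly dominate). Hence the real technical content is a splitting lemma, which I would prove by strong induction on $|D|$: \emph{every strong dominating set $D$ with $|D|\ge 2$ in a graph without a universal vertex can be partitioned into at least two classes, each a non-(strong-dominating) set, such that each class forms a strong coalition with another class of the partition.}

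For the induction, the easy case is when some vertex $v\in D$ is non-redundant, i.e. $D\setminus\{v\}$ is not strongly dominating: then $\{v\}$ and $D\setminus\{v\}$ are both non-dominating and their union $D$ is strongly dominating, so they form a coalition; this also settles the base case $|D|=2$ by splitting into two singletons. In the hard (redundant) case I would fix a minimal strong dominating subset $M\subseteq D$, so $|M|\ge 2$ and $M\setminus\{m\}$ is non-dominating for every $m\in M$. Choosing $m\in M$, set $P=M\setminus\{m\}$ and $D'=D\setminus P=\{m\}\cup(D\setminus M)$. By minimality $P$ is non-dominating, so redundancy of $D$ forces $D'$ to be strongly dominating; moreover $|D'|<|D|$ and $|D'|\ge 2$ (again because $G$ has no universal vertex), so the induction hypothesis applies to $D'$. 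I would then adjoin $P$ to the refinement of $D'$ as one extra class: its coalition partner is the class of that refinement containing $m$, since the union of that class with $P$ contains $M$ and is therefore strongly dominating, while every inherited class keeps its old partner. This yields at least three classes for $D$, all non-dominating and all lying in coalitions.

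Finally I would assemble the refinements of $D_1,\dots,D_t$ into a single partition of $V(G)$. Every class is non-dominating (so the ``single vertex of degree $n-1$'' alternative in Definition \ref{2.2} is vacuous, consistent with $G$ having no universal vertex) and every class forms a strong coalition with another class coming from the same $D_i$; hence this is a strong coalition partition with at least $2t$ classes, proving $SC(G)\ge 2d_{st}(G)$. The crux is entirely the redundant case of the splitting lemma; once the minimal-strong-dominating-subset trick and the induction are in place, the assembly of the global partition is routine.
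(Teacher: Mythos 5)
Your overall strategy is sound and takes a genuinely different route from the paper. The paper rests on the same key fact you use (any split of a \emph{minimal} strong dominating set of size at least $2$ into two nonempty parts gives a strong coalition), but it organizes the argument globally: it shrinks $S_1,\dots,S_{k-1}$ to minimal strong dominating subsets, dumps all leftover vertices into the last set $S_k$, splits each minimal set in two, and then disposes of $S_k$ by a separate case analysis in which the residue $S''_k=S_k\setminus S'_k$ either joins the partition as its own class or is merged into one of the two halves of $S'_k$. Your per-set splitting lemma, proved by induction, is more modular: each $D_i$ is refined independently, and the leftover vertices are absorbed by the recursion on $D'=\{m\}\cup(D\setminus M)$ instead of by an ad hoc merge; this also forces into the open some verifications the paper glosses over (e.g.\ that the merged set in its last case is still not a strong dominating set).

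However, there is a misalignment in your case dichotomy that you must repair. You define ``redundant'' at the level of sets (every two-part split of $D$ has a strongly dominating part), but your easy case is stated at the level of vertices (some $D\setminus\{v\}$ is not strongly dominating). These do not match: the negation of your easy case says only that every $D\setminus\{v\}$ is strongly dominating, and that does \emph{not} justify the step ``redundancy of $D$ forces $D'$ to be strongly dominating,'' since $P=M\setminus\{m\}$ may contain several vertices. The two notions genuinely differ: in $C_6$ (where strong domination coincides with domination, all degrees being equal) the set $D=\{v_1,v_2,v_4,v_5\}$ has every $D\setminus\{v\}$ dominating, yet $\{v_1,v_2\}$ and $\{v_4,v_5\}$ are both non-dominating; so $D$ satisfies neither your easy case nor set-level redundancy, and as written your induction does not cover it. The fix is one line and uses only what you already have: either take the easy case to be ``$D$ is not redundant'' in the set-level sense (then the desired split exists by definition, and in the redundant case your inference about $D'$ is exactly right), or keep your cases and note that if $D'$ fails to be strongly dominating then $\{P,D'\}$ is itself the desired split, and otherwise recurse. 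With that adjustment your lemma, and hence $SC(G)\ge 2d_{st}(G)$, goes through.
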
 

\begin{proof} 
	 Let $G$ has a strong domatic partition $\mathcal{S}=\{S_1, S_2,\ldots, S_k\}$ with $d_{st}(G)=k$. Since $G$ has no vertices of degree $n-1$ then $|S_i|>1$ for any $i$.  Without loss of generality we assume that the sets $\{S_1,S_2,\ldots,
S_{k-1}\}$ are minimal strong dominating sets. Indeed, if for some $i$, the set $S_i$ is not minimal, we find a subset $S'_i\subseteq S_i$ that is a minimal strong dominating set, and add the remaining vertices to the set $S_k$. Note that if we partition a minimal strong dominating set with more than one element into two non-empty sets, we obtain two non-strong dominating sets that together form a strong coalition. As a result, we divide each non singleton set $S_i$ into two sets $S_{i,1}$ and $S_{i,2}$ that form a strong coalition. This gives us a new partition $\mathcal{S}'$ consisting of non-strong dominating sets that pair with some other non-strong dominating set in $\mathcal{S}'$ form a strong coalition. 

We now check the strong dominating set $S_k$. 

If $S_k$ is a minimal strong dominating set, we divide it into two non-strong dominating sets, add these sets to $\mathcal{S}'$, and obtain a strong coalition partition of order at least $2k$. Then, since $k=d_{st}(G)$, $SC(G)\ge 2d_{st}(G)$.

If $S_k$ is not a minimal strong dominating set, we aim to get a subset $S'_k\subseteq S_k$ that holds this condition. Again, we use the strategy on partitioning $S'_k$ into two non-strong dominating sets giving together a strong coalition. Afterwards, we define $S''_k$ as the complement of $S'_k$ in $S_k$, and append $S'_{k,1}$ and $S'_{k,2}$ to $\mathcal{S}'$. If $S''_k$ can merge
with any non-strong dominating set to form a strong coalition, one can obtain a strong coalition partition of a cardinality at least $2k+1$ by adding $S''_k$ to $\mathcal{S}'$. Then, $SC(G)\ge 2d_{st}(G)+1$. However, if $S''_k$ can not form a strong coalition with any set in $\mathcal{S}'$, we eliminate $S'_{k,2}$ from $\mathcal{S}'$ and add the set $S'_{k,2}\cup S''_k$ to $\mathcal{S}'$. This leads to a strong coalition partition of a cardinality at least $2k$. Then, $SC(G)\ge 2d_{st}(G)$.

Due to the above arguments, we  conclude that $SC(G)\ge 2d_{st}(G)$. \qed
 \end{proof}

Since for any graph $G$ we have $d_{st}(G)\geq 1$ then by Theorem~\ref{e1} we immediately have the following statements.
\begin{corollary}
If $G$ is a graph of order $n>1$ with no vertices of degree $n-1$, then $SC(G)\ge 2$.
\end{corollary}

\begin{corollary} \label{corlesstwo}
If $G$ is a graph with $SC(G)<2$, then $G$ contains at least one vertex of degree $n-1$.
\end{corollary}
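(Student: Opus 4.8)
The plan is to obtain this corollary as the logical contrapositive of the corollary immediately preceding it, which asserts that every graph of order $n>1$ having no vertex of degree $n-1$ satisfies $SC(G)\ge 2$. Concretely, I would assume $SC(G)<2$ and then read off what the preceding corollary forbids: its two hypotheses cannot both hold simultaneously, so either the order of $G$ satisfies $n\le 1$, or else $G$ must already contain a vertex of degree $n-1$. In the latter case the conclusion is immediate, so the only thing left to address is the order constraint $n>1$ that appears in the hypothesis of the preceding corollary.

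Next I would dispose of the boundary case excluded by that hypothesis. If $n=1$, the single vertex $v$ of $G$ has $deg(v)=0=n-1$, so $v$ is (trivially, by the convention that $\delta=\Delta=n-1=0$) a vertex of degree $n-1$, and the desired conclusion holds. Combining this with the case $n>1$ handled above, we find that in every case $G$ contains at least one vertex of degree $n-1$, which is exactly the assertion. Thus the whole argument is just a case split on whether $n>1$, with the preceding corollary doing the work in one case and a direct inspection doing it in the other.

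I do not anticipate a genuine obstacle here, since all of the substance is already inherited from Theorem~\ref{e1} together with the universal bound $d_{st}(G)\ge 1$. The one point meriting care is simply that the preceding corollary is stated only for $n>1$, so the order-one graph cannot be subsumed under it and must be verified by hand; this is the sole mild subtlety, and it resolves at once once one observes that in an order-one graph the unique vertex is universal.
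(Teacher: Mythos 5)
Your proposal is correct and follows essentially the same route as the paper, which presents this corollary as an immediate consequence (i.e., the contrapositive) of Theorem~\ref{e1} together with $d_{st}(G)\ge 1$. Your explicit handling of the boundary case $n=1$ (where the unique vertex trivially has degree $n-1=0$) is a point the paper leaves implicit, but it does not change the substance of the argument.
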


Now, we characterize the graphs $G$ having $SC(G)=1$.
\begin{lemma}\label{l1}
 $SC(G)=1$ if and only if $G=K_1$.
\end{lemma}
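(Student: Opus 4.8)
The plan is to prove the biconditional $SC(G)=1$ if and only if $G=K_1$ by handling the two directions separately, with the forward direction being the substantive one. The reverse direction is immediate: if $G=K_1$, the single vertex forms a trivial partition, and since this lone vertex is a strong dominating set consisting of a single vertex of degree $n-1=0$, the partition $\Omega=\{\{v\}\}$ satisfies Definition~\ref{2.2}, giving $SC(K_1)=1$.

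\medskip
For the forward direction, I would argue contrapositively: assuming $G\neq K_1$ (so $n\geq 2$), I want to show $SC(G)\neq 1$, i.e., either $SC(G)=0$ or $SC(G)\geq 2$. The key leverage is Corollary~\ref{corlesstwo}, which tells us that $SC(G)<2$ forces $G$ to contain a vertex of degree $n-1$. So suppose $SC(G)=1$; then $G$ has a universal vertex. The plan is then to split into cases based on the structure of $G$. If $G=K_n$ with $n\geq 2$, I would exhibit an explicit strong coalition partition of size at least $2$ to contradict $SC(G)=1$: since every vertex has the same degree $n-1$, any partition of $V$ into two nonempty sets $V_1,V_2$ yields two non-strong-dominating sets whose union is all of $V$ (a strong dominating set), so $SC(K_n)\geq 2$ for $n\geq 2$. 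If instead $G\neq K_n$ but still has a universal vertex, then $G\in\mathcal{F}$, and Proposition~\ref{1} gives $SC(G)=0$, again contradicting $SC(G)=1$.

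\medskip
The main obstacle I anticipate is the careful verification in the $K_n$ case that a two-part partition genuinely witnesses $SC(K_n)\geq 2$ under Definition~\ref{2.2}: I must confirm that neither $V_1$ nor $V_2$ is itself a strong dominating set (which holds because, with $n\geq 2$, each proper nonempty subset misses some vertex $x$, and although $x$ has a neighbor of equal degree in the other part, it has none inside the set, so the set fails to strongly dominate $x$) and that each part forms a strong coalition with the other. A subtle point to watch is whether Definition~\ref{2.2} permits a partition all of whose parts are non-strong-dominating sets paired into coalitions, with no singleton universal-vertex block; since $K_n$ for $n\geq 2$ has every vertex universal, a singleton $\{v\}$ is itself a strong dominating set, so I should instead take a partition into two parts each of size at least $2$ when $n\geq 4$, or otherwise argue directly for the small cases $n\in\{2,3\}$. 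Assembling these cases, every graph $G\neq K_1$ satisfies $SC(G)\in\{0\}\cup\{2,3,\dots\}$, so $SC(G)=1$ forces $G=K_1$, completing the proof.
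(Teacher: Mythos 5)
Your reverse direction is fine, and your contrapositive skeleton (Corollary~\ref{corlesstwo} to get a universal vertex, then Proposition~\ref{1} for the non-complete case) is logically sound, but the case $G=K_n$, $n\geq 2$, contains a genuine error. You claim that a proper nonempty subset $V_1\subsetneq V(K_n)$ fails to strongly dominate a missed vertex $x$ because $x$ ``has no neighbor of equal degree inside the set.'' That is false: in a complete graph \emph{every} vertex of $V_1$ is adjacent to $x$, and all degrees equal $n-1$, so any $y\in V_1$ satisfies $xy\in E$ and $\deg(x)\leq\deg(y)$. Hence every nonempty subset of $V(K_n)$ is a strong dominating set. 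It follows that no two disjoint nonempty sets in $K_n$ can form a strong coalition (a coalition requires both parts to fail to be strong dominating sets), so both of your proposed witnesses --- a partition into two nonempty parts, and your ``fix'' into two parts of size at least $2$ --- are not strong coalition partitions of $K_n$ at all. The repair is the opposite of what you wrote: take the partition of $K_n$ into $n$ singletons; each $\{v\}$ is a strong dominating set consisting of a single vertex of degree $n-1$, which is precisely the first alternative allowed by Definition~\ref{2.2}, giving $SC(K_n)=n\geq 2$ (this is also how the paper justifies that $K_n$ attains the upper bound in Corollary~\ref{cor1}).

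Beyond this fix, note that the paper's own proof is far more direct and needs none of this machinery: if $SC(G)=1$, the partition must be $\{V\}$, and since this unique set has no other set available to form a strong coalition with, Definition~\ref{2.2} forces $V$ itself to be a strong dominating set consisting of a single vertex of degree $n-1$; hence $|V|=1$ and $G=K_1$. Reading the definition directly in this way eliminates the entire case analysis on universal vertices, so even after correcting the $K_n$ step your route is substantially longer than necessary.
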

\begin{proof} If $SC(G)=1$, then $\{V\}$ is a $SC(G)$-partition. By definition strong $c-$partition, we must
have $|V| = 1$ and so $G=K_1$. Conversely, if $G=K_1$, clearly we have
$SC(G)=1$. \qed
\end{proof}

By Proposition~\ref{1} and Lemma~\ref{l1} we obtain the following result.

\begin{corollary}~\label{cor1}
If $G\not\in\mathcal{F}$ is a graph of order $n$, then $1\leq SC(G) \leq n$.
\end{corollary}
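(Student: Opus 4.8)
The plan is to prove the two inequalities separately, obtaining the upper bound from a trivial counting argument and the lower bound from a short case analysis driven by the definition of $\mathcal{F}$. For the upper bound I observe that any strong coalition partition is in particular a partition of $V(G)$ into nonempty blocks, and such a partition can have at most $|V(G)|=n$ parts; hence $SC(G)\le n$ whenever a strong coalition partition exists, and trivially even otherwise.

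For the lower bound I would first unpack the hypothesis $G\notin\mathcal{F}$. By definition $\mathcal{F}$ consists of the graphs $G\neq K_n$ that possess a vertex of degree $n-1$, so by De Morgan $G\notin\mathcal{F}$ means that either $G=K_n$ or $G$ has no universal vertex. This dichotomy is the backbone of the argument, and I would treat the two cases in turn.

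In the first case $G=K_n$, I would exhibit an explicit strong coalition partition by splitting $V(G)$ into its $n$ singletons. Each singleton $\{v\}$ is a strong dominating set consisting of a single vertex of degree $n-1$, because in $K_n$ every vertex is adjacent to all others and every degree equals $n-1$, so the requirement $\deg(x)\le\deg(v)$ holds for all $x$. This partition satisfies Definition \ref{2.2}, giving $SC(G)=n\ge 1$; for $n=1$ it reduces to $SC(K_1)=1$, in agreement with Lemma \ref{l1}. In the second case $G$ has no universal vertex, which forces $n\ge 2$ since the lone vertex of a one-vertex graph has degree $n-1$. Here I would invoke Corollary \ref{corlesstwo} in contrapositive form: a graph with no vertex of degree $n-1$ satisfies $SC(G)\ge 2$, so again $SC(G)\ge 1$. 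Combining both cases with the upper bound yields $1\le SC(G)\le n$.

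The argument is essentially bookkeeping, so I do not anticipate a serious obstacle; the one point that requires care is recognizing that the complete graph is precisely the configuration in $G\notin\mathcal{F}$ that is \emph{not} covered by the ``no universal vertex'' results, and so must be handled by the direct singleton construction rather than by Theorem \ref{e1} or Corollary \ref{corlesstwo}.
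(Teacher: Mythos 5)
Your proof is correct, but it is considerably more explicit than the paper's, which offers no argument at all beyond the line ``By Proposition~\ref{1} and Lemma~\ref{l1} we obtain the following result.'' Taken literally, those two citations do not suffice: Proposition~\ref{1} only says that graphs \emph{in} $\mathcal{F}$ have $SC=0$ (a necessary condition for existence), and Lemma~\ref{l1} only characterizes $SC(G)=1$; neither establishes the positive claim that every $G\notin\mathcal{F}$ actually admits a strong coalition partition. Your case split is exactly what fills this gap: unpacking $G\notin\mathcal{F}$ as ``$G=K_n$ or $G$ has no universal vertex,'' handling $K_n$ by the singleton partition (which the paper only acknowledges implicitly, in the remark after the corollary that the upper bound is achieved by complete graphs), and handling the no-universal-vertex case via the contrapositive of Corollary~\ref{corlesstwo}, i.e.\ ultimately via Theorem~\ref{e1} and $d_{st}(G)\geq 1$ --- a result the paper proved but did not cite here. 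The trivial counting bound $SC(G)\leq n$ is the same in both. In short, you and the paper rely on the same underlying toolkit from Sections 2--3, but your write-up supplies the actual deduction (in particular the dichotomy and the $K_n$ construction) that the paper's one-line attribution leaves missing, so yours is the more defensible proof of the two.
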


The upper bound of Corollary~\ref{cor1} is achieved  by complete graphs $K_n$.\medskip

We next establish a key result, which gives us the number of strong coalitions involving
any set in a $SC(G)$-partition of $G$.

\begin{theorem}\label{thm:Delta1}
	Let $G$ be a graph with maximum degree $\Delta(G)$,
and let $\Omega$ be a $SC(G)$-partition. If $X \in \Omega$, then $X$ is in at most $\Delta(G)+1$ strong coalitions.
\end{theorem}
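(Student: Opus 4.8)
The plan is to fix a set $X \in \Omega$ and bound the number of partition classes that can serve as a strong-coalition partner of $X$ by localizing the obstruction to strong domination at a single vertex. First I would dispose of the trivial case: if $X$ is itself a strong dominating set, then by the definition of a strong coalition $X$ forms no strong coalition at all (a coalition requires \emph{both} sets to fail to be strong dominating sets), so the bound $\Delta(G)+1$ holds vacuously. Hence I may assume $X$ is not a strong dominating set. Then there is a vertex $w \in V(G)\setminus X$ that is not strongly dominated by $X$; that is, no $y \in X$ with $wy \in E(G)$ satisfies $deg(w) \le deg(y)$.

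Next I would introduce the witness set of vertices capable of strongly dominating $w$, together with $w$ itself:
$$ N = \{w\} \cup \{\, y \in V(G) : wy \in E(G) \text{ and } deg(y) \ge deg(w) \,\}. $$
The central claim is that any class $Y \in \Omega$ forming a strong coalition with $X$ must meet $N$. Indeed, $X \cup Y$ is then a strong dominating set: if $w \in Y$, then $w \in N \cap Y$; otherwise $w \in V(G)\setminus(X\cup Y)$, so some vertex of $X \cup Y$ strongly dominates $w$, and since no vertex of $X$ can do so, that vertex lies in $Y$ and belongs to $N$ by construction. I would also record the complementary fact that $N \cap X = \emptyset$: we have $w \notin X$ by choice, and no neighbor of $w$ lying in $X$ has degree at least $deg(w)$, precisely because $X$ fails to strongly dominate $w$.

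Finally I would count. Every strong-coalition partner of $X$ is a distinct class of $\Omega$ containing at least one vertex of $N$, and by the previous paragraph each such class is different from $X$. Since distinct classes of a partition are pairwise disjoint, the number of classes meeting $N$ is at most $|N|$. As $|N| \le 1 + deg(w) \le 1 + \Delta(G)$, it follows that $X$ participates in at most $\Delta(G)+1$ strong coalitions.

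The only delicate point, and the step I expect to require the most care, is the case analysis establishing that a partner $Y$ must meet $N$: one must invoke the degree inequality from the definition of strong domination in the correct direction and carefully separate the subcase $w \in Y$ from $w \notin Y$, since when $w \in Y$ the vertex $w$ need not be strongly dominated by $X \cup Y$ at all. The accompanying fact $N \cap X = \emptyset$ is what guarantees the counted classes are genuinely distinct from $X$, so I would make sure both halves of the argument use the failure of $X$ to strongly dominate $w$ consistently.
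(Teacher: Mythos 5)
Your proof is correct and follows essentially the same route as the paper's: both localize the obstruction at a single vertex $w$ not strongly dominated by $X$ and observe that every coalition partner must either contain $w$ or contain a neighbor of $w$ of degree at least $deg(w)$, giving at most $deg(w)+1 \le \Delta(G)+1$ pairwise disjoint partners. Your packaging of this count via the witness set $N$ is a slightly cleaner formulation of the paper's case split (the class containing $w$ versus the remaining partners), but the underlying argument is identical.
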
 
\begin{proof} Let $\Omega=\{V_1, V_2, \ldots, V_k\}$ be a $SC(G)$-partition. Without loss of generality, let $V_1$ is a
strong dominating set, then it is a singleton set and is in no strong coalitions. Now, we may assume that $V_1$ be a non-singleton set. It follows by definition that $V_1$ is not a strong dominating set. Therefore there is a vertex not in $V_1$, call it $x$, which is not strongly dominated by $V_1$. Let $x \in V_2$. Now, let each of $V_3, V_4, \ldots, V_\ell$ forms a strong coalition with $V_1$, that is, $V_1 \cup V_3$, $V_1 \cup V_4$, \ldots, $V_1 \cup V_\ell$ are all strong dominating sets, where $\ell \le k$. It follows that since $V_1$ does not strongly dominate vertex $x$, that each set $V_3, V_4, \ldots, V_\ell$ must strongly dominate vertex $x$. This means, in particular, that each set $V_3, V_4, \ldots, V_\ell$ contains a vertex $y$ which is adjacent to vertex $x$, and $deg(y) \ge deg(x)$. Thus, $\ell-2 \le deg(x) \le \Delta(G)$. This implies that $V_1$ can form a strong coalition with at most $\Delta(G)$ sets in $V_3, V_4, \ldots, V_\ell$. This brings us to consider $V_1 \cup V_2$, which might be a strong dominating set. If it is a strong dominating set, then $V_1$ can form a strong coalition with $\Delta(G)+1$ sets (including set $V_2$), otherwise $V_1$ can form a strong coalition with at most $\Delta(G)$ sets. \qed

\end{proof}

    Now, we establish a sharp upper bound on the strong coalition in terms of the maximum degree of $G$.

\begin{theorem}\label{thm:Delta2}
	For any graph $G$ with $\delta(G)=1$, we have
	$SC(G) \leq 2+2\Delta(G).$
\end{theorem}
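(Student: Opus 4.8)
The plan is to single out a vertex of degree one and show that every strong coalition in a $SC(G)$-partition must involve one of (at most) two special blocks, after which Theorem~\ref{thm:Delta1} caps how many blocks those two can pair with.

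Before the main argument I would clear away the trivial configurations. If $G$ has a universal vertex, then since $\delta(G)=1$ either $G=K_2$ (a direct check gives $SC(K_2)=2\le 2+2\Delta(G)$), or $G\ne K_n$ and $G\in\mathcal{F}$, so $SC(G)=0$ by Proposition~\ref{1}; either way the inequality holds. So I may assume $SC(G)\ge 1$ and that $G$ has no universal vertex, which by Definition~\ref{2.2} means that every block of a $SC(G)$-partition $\Omega=\{V_1,\dots,V_k\}$ genuinely takes part in at least one strong coalition.

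The heart of the proof is a domination bottleneck at a degree-one vertex. Fix $v$ with $\deg(v)=1$, let $u$ be its unique neighbour, and say $v\in V_a$ and $u\in V_b$. Since the union of any strong coalition is a strong dominating set it must strongly dominate $v$, and because $N(v)=\{u\}$ with $\deg(v)=1\le\deg(u)$, this is possible only if the union contains $v$ or $u$. Hence every strong coalition involves $V_a$ or $V_b$, and every block other than $V_a,V_b$ is a strong coalition partner of $V_a$ or of $V_b$. Writing $P_a,P_b$ for the families of partners of $V_a,V_b$, Theorem~\ref{thm:Delta1} gives $|P_a|,|P_b|\le\Delta(G)+1$, and $\{V_1,\dots,V_k\}\setminus\{V_a,V_b\}\subseteq P_a\cup P_b$. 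I would then finish by cases: if $a=b$ then $k-1\le\Delta(G)+1$; and if $a\ne b$ with $V_a\cup V_b$ a strong dominating set, then (as no block is a strong dominating set) $V_a\in P_b$ and $V_b\in P_a$, so discarding these two sets from $P_a\cup P_b$ leaves at most $2\Delta(G)$ blocks and $k\le 2+2\Delta(G)$. This last configuration is the one meeting the bound with equality.

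The hard part will be the residual case $a\ne b$ with $V_a\cup V_b$ \emph{not} strongly dominating, where the crude estimate $|P_a\cup P_b|\le|P_a|+|P_b|$ only delivers $k\le 2\Delta(G)+4$. To recover the stated constant I would choose a vertex $w\notin V_a\cup V_b$ that $V_a\cup V_b$ fails to strongly dominate; then $w$ is strongly dominated by neither $V_a$ nor $V_b$, so any partner of $V_a$ or of $V_b$ that avoids $w$ must contain a neighbour of $w$ of degree at least $\deg(w)$. As the blocks are pairwise disjoint these neighbours are distinct, bounding the number of such partners by $\deg(w)\le\Delta(G)$; adding the single block containing $w$ gives $|P_a\cup P_b|\le\Delta(G)+1$, so $k\le\Delta(G)+3\le 2+2\Delta(G)$ because $\Delta(G)\ge1$. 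Combining the three cases yields $SC(G)\le 2+2\Delta(G)$.
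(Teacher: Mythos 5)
Your proof is correct. Its skeleton agrees with the paper's: both arguments exploit the bottleneck at a degree-one vertex $v$ with unique neighbour $u$ (any strong coalition's union must strongly dominate $v$, hence must meet $\{v,u\}$), then cap partner counts with Theorem~\ref{thm:Delta1} and split into the cases where $v,u$ lie in one block, in two blocks forming a strong coalition, or in two blocks not forming one. Where you genuinely depart from the paper is in the two ends of the argument. For the preliminaries, the paper splits off $\Delta(G)\ge \frac{n-1}{2}$ using the trivial bound $SC(G)\le n$, whereas you split off universal vertices via Proposition~\ref{1} and a direct check of $K_2$; your reduction has the added benefit of making explicit why every block participates in some strong coalition, a fact the paper uses implicitly. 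For the hard case ($V_a\cup V_b$ not strongly dominating), the paper argues by contradiction: it assumes $t>2+2\Delta(G)$, takes a vertex $v_i$ not strongly dominated by $V_1$, distributes its neighbours one per partner of $V_1$ under a rather loosely justified ``without loss of generality,'' and then shows a partner of $V_2$ fails to strongly dominate $v_i$. You instead choose a witness $w$ not strongly dominated by the \emph{union} $V_a\cup V_b$ — available exactly because this case assumes the union is not a strong dominating set — and note that every partner of either block must contain $w$ or a distinct high-degree neighbour of $w$, so $|P_a\cup P_b|\le\Delta(G)+1$ and $k\le\Delta(G)+3$ in this case. This is direct rather than by contradiction, avoids the positional bookkeeping, yields a strictly stronger bound in that case, and as a by-product isolates where equality $SC(G)=2+2\Delta(G)$ can occur: only when the two distinguished blocks form a strong coalition with each other.
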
 
\begin{proof} 
If $\Delta(G)\geq \frac{n-1}{2}$, then by Corollary \ref{cor1} we have the result. Now, suppose that $\Delta(G) < \frac{n-1}{2}$. Let $V(G)=\{v_1,v_2,\ldots,v_n\}$ and $\Omega =\{V_1,V_2,\ldots,V_t\}$ be a $SC(G)$-partition. Suppose that $\deg (v_1)=1$. Every set of $\Omega$ that does not contain either $v_1$ and $v_2$ must be in a strong coalition with
a set containing either $v_1$ and $v_2$. If $v_1$ and $v_2$ are in the same
set of $\Omega$, then since $\deg (v_1)=1$, Theorem \ref{thm:Delta1} implies that $SC(G) \leq 1+1+\Delta(G)$. So, clearly $SC(G) \leq 2+2\Delta(G)$.  Now, suppose that $v_1$ and $v_2$ are in different sets
of $\Omega$. Without loss of generality, let $v_1\in V_1$ and $v_2\in V_2$. So, each of $V_1$ and $V_2$ is in a strong coalition with at most $1+\Delta(G)$ sets. If $V_1$ and $V_2$ form a strong coalition, then
$SC(G) \leq 2+2\Delta(G)$ as each of $V_1$ and $V_2$ can be in at most $\Delta(G)$ additional
strong coalitions. Otherwise, if $V_1$ and $V_2$ do not form a strong coalition, we also have the result. To show it, suppose that $t>2+2\Delta(G)$. Without loss of generality, we may
assume that $V_1$ is in a strong coalition with each of $V_3$, $V_4$, $\ldots$, $V_{\Delta(G) +2}$ and $V_{\Delta(G) +3}$, while
$V_2$ is in a strong coalition with each of $V_{\Delta(G) +4}$, $V_{\Delta(G) +5}$, $\ldots$, $V_{2\Delta(G) +2}$ and $V_{2\Delta(G) +3}$. Let $v_i$ be a vertex that
is not strongly dominated by $V_1$ and  $\deg(v_i)\leq\Delta(G)$. Without loss of generality, suppose that $N(v_i)=\{v_1',v_2',\ldots,v_{\Delta(G)}'\}$, $v_i\in V_3$, and $v_1'\in V_4$, $v_2'\in V_5$, $\ldots$, $v_{\Delta(G)-1}'\in V_{\Delta(G) +2}$ and $v_{\Delta(G)}'\in V_{\Delta(G) +3}$. But now $V_2 \cup V_{\Delta(G) +4}$ does not strong dominate
$v_i$, contradicting that $V_2$ and  $V_{\Delta(G) +4}$ form a strong coalition in $G$.
\qed
\end{proof}

Applying Theorem \ref{thm:Delta2} to the class of trees, we have the following result.

\begin{corollary}
If $T$ is a tree of order $n$, then $SC(T) \leq 2+2\Delta(T)$.

\end{corollary}

We next state the following result.

\begin{proposition}\label{1-Delta}
	Let $G$ be a graph of order $n$ with maximum degree $\Delta\leq n-2$. If $G$ has only one vertex with maximum degree and $r$ strong dominating sets, then $SC(G)\leq r+1$.
\end{proposition}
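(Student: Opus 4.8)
The plan is to exploit the uniqueness of the maximum-degree vertex to force every strong coalition of an $SC(G)$-partition to pass through a single part, and then to count the resulting strong dominating sets. First I would record that since $\Delta(G)\leq n-2$, the graph has no vertex of degree $n-1$, so no member of any strong coalition partition can be a singleton strong dominating set. Consequently, every set in an $SC(G)$-partition is a non-strong-dominating set that must form a strong coalition with another such set.

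The key structural step I would establish is that the unique vertex $v$ of maximum degree $\Delta$ belongs to \emph{every} strong dominating set of $G$. Indeed, suppose $D$ is a strong dominating set with $v\notin D$. Then $v$ would need a neighbor $y\in D$ with $\deg(y)\geq \deg(v)=\Delta$; but $v$ is the only vertex of degree $\Delta$, so no such $y$ exists, a contradiction. Hence $v\in D$ for every strong dominating set $D$.

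Next, fix an $SC(G)$-partition $\Omega=\{V_1,V_2,\ldots,V_k\}$ and let $V_1$ be the unique part containing $v$; by the first observation $V_1$ is not itself a strong dominating set. Since the union of any strong coalition is a strong dominating set, it must contain $v$, and therefore every strong coalition must involve $V_1$. Because each $V_i$ with $i\neq 1$ is not a strong dominating set, it has to form a strong coalition with some part, and that partner is forced to be $V_1$. Thus $V_1\cup V_i$ is a strong dominating set for every $i\in\{2,\ldots,k\}$.

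Finally I would count. The sets $V_1\cup V_2,\, V_1\cup V_3,\,\ldots,\,V_1\cup V_k$ are pairwise distinct, since the parts $V_2,\ldots,V_k$ are nonempty and mutually disjoint (and disjoint from $V_1$), so they yield $k-1$ distinct strong dominating sets. As $G$ has exactly $r$ strong dominating sets in total, this forces $k-1\leq r$, that is, $SC(G)=k\leq r+1$. The only genuine obstacle is the structural claim that $v$ lies in every strong dominating set; once that is secured, the reduction ``all coalitions pass through $V_1$'' and the distinctness count are routine.
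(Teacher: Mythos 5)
Your proof is correct and rests on the same key observation as the paper's: since $\Delta\leq n-2$ and the maximum-degree vertex $v$ is unique, $v$ must lie in every strong dominating set, so every strong coalition in a partition must involve the part containing $v$. In fact your write-up is more complete than the paper's one-line proof (which essentially stops after noting that $u$ lies in every strong dominating set, so that $S\setminus\{u\}$ and $\{u\}$ form a strong coalition, and declares the result), since you supply the counting step that the paper leaves implicit: each of the other $k-1$ parts must form its coalition with the part containing $v$, and the resulting unions are pairwise distinct strong dominating sets, forcing $k-1\leq r$ and hence $SC(G)\leq r+1$.
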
 

\begin{proof}
	Suppose that $u$ is the vertex with maximum degree. Since $u$ should be in every strong dominating sets like $S$, so $S\setminus \{u\}$ and $\{u\}$ form a strong coalition and we are done. 
	\qed
\end{proof}

\section{Graphs $G$ with $SC(G)=|V(G)|$}

\begin{figure}
\begin{center}
\includegraphics[width=0.6\linewidth]{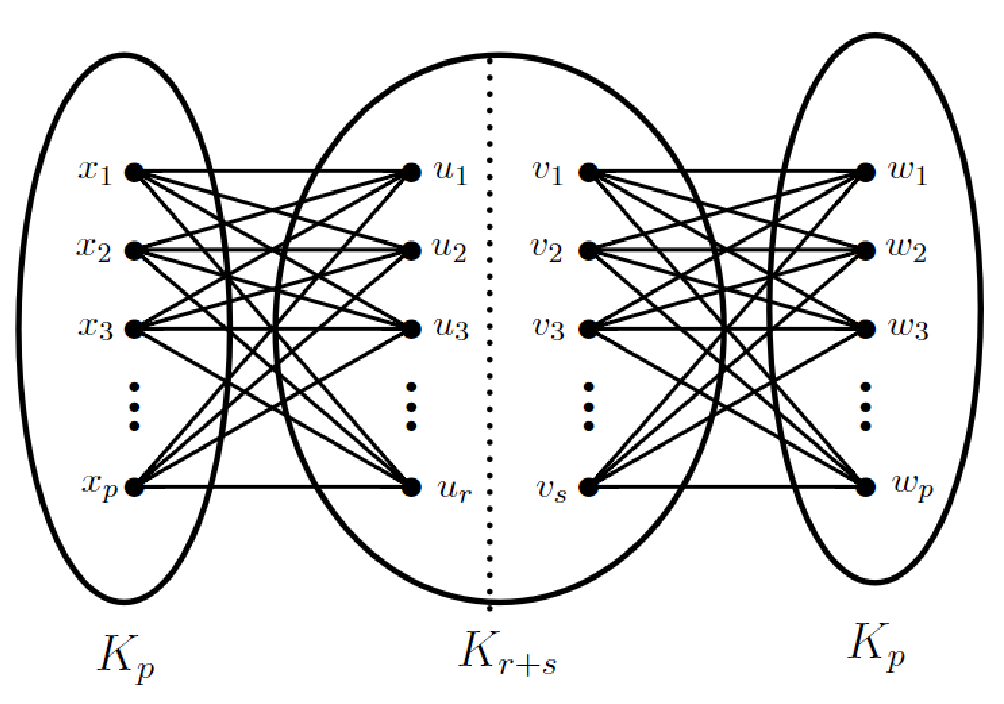}
\caption{\label{Characterize:n} Family ${\cal G}$}
\end{center}
\end{figure}

Studying graphs  $G$ with large and small strong coalition numbers form interesting problems. In this short section, we give a sufficient condition for graphs $G$ of order at least 4 with $SC(G)=|V(G)|$. To construct a family ${\cal G}$ of graphs with these properties, consider a complete  graph $K_{r+s}$, where its vertex set is $A\cup B$ such that $A=\{u_1,...,u_r\}$, $B=\{v_1, ..., v_s\}$, $A\cap B=\emptyset$,   $r\geq 0$, $s\geq 0$. 
Any graph $G$ of order at least 4  in the family ${\cal G}$ has (see Figure \ref{Characterize:n}), $V(G)=A\cup B\cup 2V(K_p)$  ($p\geq 0$), where 
$V(K_p)_1=\{x_1,x_2,...,x_p\}$ and $V(K_p)_2=\{w_1,w_2,...,w_p\}$ and 
\[
E(G)=E(K_{r+s})\cup E(2K_p)\cup \{x_iu_j: 1\leq i\leq p, 1\leq j\leq r\}\cup \{w_iv_j: 1\leq i\leq p, 1\leq j\leq s\}.
\]
 
\begin{theorem}
If  $G\in \cal G$ is a graph of order $|V(G)|\geq 4$, then $SC(G)=|V(G)|$.
\end{theorem}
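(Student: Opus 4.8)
The plan is to prove the two inequalities $SC(G)\le n$ and $SC(G)\ge n$ separately, where $n=|V(G)|=r+s+2p$. For the upper bound I would first check that $G\notin\mathcal{F}$: when $p=0$ the graph is exactly $K_{r+s}=K_n$, and when $p\ge 1$ a direct degree count (below) shows no vertex has degree $n-1$; in either case $G\notin\mathcal{F}$, so Corollary~\ref{cor1} gives $SC(G)\le n$ immediately. The entire content of the theorem therefore lies in the lower bound, which I would establish by showing that the partition of $V(G)$ into singletons, $\Omega=\{\{z\}:z\in V(G)\}$, is a strong coalition partition; this forces $SC(G)\ge n$, and hence $SC(G)=n$.

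To verify that $\Omega$ is a strong coalition partition I would first record the degrees: each $u_j$ and each $v_k$ has degree $r+s+p-1$ (the maximum), each $x_i$ has degree $r+p-1$, and each $w_i$ has degree $s+p-1$. If $p=0$ then $G=K_n$, every singleton is a strong dominating set consisting of a universal vertex, and $\Omega$ is trivially a strong coalition partition. Assume now $p\ge 1$; then no vertex is universal, so no singleton is a strong dominating set, and it remains to produce, for each vertex $z$, a partner $z'$ such that $\{z,z'\}$ is a strong dominating set while neither $\{z\}$ nor $\{z'\}$ is.

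The key pairings (in the generic case $r,s\ge 1$) are: pair each $u_j$ and each $x_i$ with a fixed vertex $v_1\in B$, and each $v_k$ and each $w_i$ with a fixed vertex $u_1\in A$. The representative check is that $\{x_i,v_1\}$ strongly dominates everything: $v_1$ has maximum degree, so it strongly dominates all of $A\cup B$ and all of the $w$'s (to all of which it is adjacent), while $x_i$ strongly dominates the remaining $x$'s (equal degree, mutually adjacent). Symmetrically for $\{w_i,u_1\}$, and the analogous, easier, verifications hold for $\{u_j,v_1\}$ and $\{v_k,u_1\}$. In every case neither singleton is itself a strong dominating set, so each such pair forms a strong coalition, and $\Omega$ satisfies Definition~\ref{2.2}.

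Two degenerate configurations need separate but easy treatment. If $r=0$ (symmetrically $s=0$) the edge set shows that the $x$-clique becomes a component disjoint from the rest, so $G=K_{s+p}\sqcup K_p$; here one pairs each vertex of one clique with any vertex of the other, using that within a complete graph a single vertex strongly dominates the whole clique (and that $n\ge 4$ keeps both components nonempty). The main obstacle, and the place where the degree condition in strong domination actually bites, is the pairing for the low-degree vertices $x_i$ and $w_i$: since $\deg(x_i)<\deg(u_j)$ whenever $s\ge 1$, an $x_i$ cannot strongly dominate any $u_j$, so its coalition partner must be chosen among the maximum-degree vertices of $A\cup B$ rather than among the $x$'s or $w$'s. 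Making this choice correctly, and confirming that the chosen partner simultaneously covers the $w$'s while $x_i$ covers the rest of the $x$-clique, is the crux of the argument; the remaining steps are routine degree comparisons.
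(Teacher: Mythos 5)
Your proposal is correct and follows essentially the same route as the paper: both establish the trivial upper bound and then exhibit the all-singletons partition as a strong coalition partition, pairing the $x_i$'s with $B$-vertices, the $w_i$'s with $A$-vertices, and the $u_j$'s with $v_k$'s, after disposing of the degenerate cases $p=0$ and $r=0$ or $s=0$ (complete graph, resp.\ disjoint union of two cliques) exactly as the paper does. Your write-up is in fact somewhat more careful than the paper's, since it records the degree computations and verifies explicitly that the low-degree vertices $x_i$, $w_i$ cannot strongly dominate across to the opposite side, but this is a difference of detail, not of method.
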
 

\begin{proof}
Suppose that $G\in \cal G$ and $V(G)=\{x_1,\ldots,x_p,w_1,\ldots,w_p,u_1,\ldots,u_r,v_1,\ldots,v_s,\}$ as we see in Figure \ref{Characterize:n}. We claim that the set consist of singleton sets is a $SC(G)$-partition of $G$. If $p=0$ then we have a complete graph and $SC(G)=|V(G)|$. If $r=0$ or $s=0$ or $r=s=0$, then we have a disjoint union of two complete graphs and $SC(G)=|V(G)|$. Now suppose that $r\geq 1$, $s\geq 1$ and $p\geq 1$. Then $\{x_i\}$ forms a strong coalition with $\{v_j\}$ for $1\leq i \leq p$ and $1\leq j \leq s$, $\{w_{i'}\}$ forms a strong coalition with $\{u_{j'}\}$ for $1\leq i' \leq p$ and $1\leq j' \leq r$, and also $\{u_{i''}\}$ forms a strong coalition with $\{v_{j''}\}$ for $1\leq i'' \leq r$ and $1\leq j'' \leq s$. Therefore, we have $SC(G)=|V(G)|$.

\qed
\end{proof}

\section{Strong coalition number for some classes of graphs }

In this section, we determine the strong coalition number of some classes of graphs.

\begin{proposition}\label{thm:scoalition-bipartite}
    
	For the complete bipartite graph $K_{r,s}$,
\begin{displaymath}
SC(K_{r,s})= \left\{ \begin{array}{ll}
2 & \textrm{~~~~ if ~~ $r>s>1$}\\
\\
2r & \textrm{~~~~ if ~~$r= s$}

\end{array} \right.
\end{displaymath}
\end{proposition}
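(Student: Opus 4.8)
The plan is to first determine exactly which subsets of $V(K_{r,s})$ are strong dominating sets, and then apply this characterization separately in the two regimes. Write $V(K_{r,s}) = A \cup B$ with $|A| = r$ and $|B| = s$, so every vertex of $A$ has degree $s$ and every vertex of $B$ has degree $r$. When $r > s$, I claim that a set $D$ is a strong dominating set if and only if $B \subseteq D$. For necessity, note that a vertex $b \in B$ has degree $r$ while all of its neighbours lie in $A$ and have degree $s < r$; hence no vertex can strongly dominate $b$, which forces $b \in D$. For sufficiency, once $B \subseteq D$ every remaining vertex of $A$ has a neighbour in $B \subseteq D$ of strictly larger degree, so it is strongly dominated. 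When $r = s$ all vertices have the same degree, so strong domination coincides with ordinary domination, and this will be the lever for that case.

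For the case $r = s$, I would take the partition of $V(K_{r,r})$ into singletons. Assuming $r \geq 2$, no vertex has degree $n-1 = 2r-1$, so no singleton is a strong dominating set; moreover a singleton from $A$ together with a singleton from $B$ dominates the whole graph, hence (degrees being equal) strongly dominates it, so the two singletons form a strong coalition. This exhibits a strong coalition partition of size $2r = n$. Since $K_{r,r}$ has no universal vertex, it does not lie in $\mathcal{F}$, so Corollary~\ref{cor1} gives $SC(K_{r,r}) \le n = 2r$, and equality follows. The degenerate case $r = s = 1$ is $K_2$, a complete graph, for which $SC = 2 = 2r$.

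For the case $r > s > 1$, the lower bound $SC(K_{r,s}) \ge 2$ is immediate from the corollary to Theorem~\ref{e1}, since such a $K_{r,s}$ has no vertex of degree $n-1$. For the upper bound, let $\Omega = \{V_1,\ldots,V_k\}$ be a strong coalition partition and set $B_i = V_i \cap B$. Because no vertex has degree $n-1$, each $V_i$ is a non-strong-dominating set that forms a strong coalition with some $V_j$; by the characterization this means $B \not\subseteq V_i$ for every $i$, while $B \subseteq V_i \cup V_j$, i.e. $B_i \cup B_j = B$. First I would show every block $B_i$ is nonempty: if some $V_i \subseteq A$, then its partner $V_j$ must satisfy $B \subseteq V_j$, making $V_j$ a strong dominating set, which is impossible for a coalition partner. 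Hence $B_1,\ldots,B_k$ partition $B$ into $k$ nonempty proper blocks. Finally, a coalition partner of $V_i$ must have $B$-block exactly $B \setminus B_i$; since the blocks partition $B$, this complement is the union of all blocks other than $B_i$, and it can equal a single block $B_j$ only when there is precisely one other block, forcing $k = 2$. Thus $SC(K_{r,s}) \le 2$, and with the lower bound $SC(K_{r,s}) = 2$.

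The main obstacle is the upper bound in the regime $r > s > 1$, and within it the final combinatorial step: the \emph{complementary block} requirement $B_j = B \setminus B_i$ must be shown to collapse the partition of $B$ down to exactly two blocks. Once the characterization of strong dominating sets as the supersets of $B$ is in hand, the rest is bookkeeping, but this last implication is where the argument must be stated carefully to rule out $k \ge 3$.
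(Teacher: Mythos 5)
Your proposal is correct and follows essentially the same route as the paper: both rest on the characterization that, for $r>s$, the strong dominating sets of $K_{r,s}$ are exactly the supersets of the smaller part $B$, and both use the all-singletons partition for $r=s$. The only difference is that where the paper simply asserts that a strong coalition partition ``clearly'' cannot have more than two sets when $r>s>1$, you supply the missing argument (every part must meet $B$ in a nonempty proper block, and a coalition partner's block must be the exact complement, forcing $k=2$), and you also handle the degenerate case $r=s=1$ that the paper glosses over.
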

\begin{proof}
Let $G= K_{r,s}$ with $r>s>1$. We need at least $s$ vertices to have a strong dominating set and all the vertices of the part with $s$ vertices should be in our set. Clearly a strong coalition partition can not have more than two sets and so $SC(K_{r,s})=2$. Now,  may assume that $r=s$. Since $\gamma_{st}=2$, no singleton set can be a strong dominating set. The partition $\Omega=\{V_1=\{v_1\},V_2=\{v_2\},...,V_{2r}=\{v_{2r}\}\}$ is a strong coalition partition of $K_{r,r}$ such that no set of $\pi$ is a strong dominating set but each set
$V_i$, for $1\leq i\leq 2r$, forms a strong coalition with another set $V_j$, for $1\leq j\leq 2r$. Hence, we have $SC(K_{r,r})=2r$.
\qed
\end{proof}

\begin{observation}{\rm \cite{7}}\label{thm:coalition-bipartite}
If $G$ is a complete bipartite graph $K_{r,s}$ such that $r>s>1$, then $C(K_{r,s})=r+s$.
\end{observation}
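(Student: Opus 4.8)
The plan is to sandwich $C(K_{r,s})$ between the trivial upper bound $r+s$ and an explicit coalition partition attaining it. Writing the two parts as $R$ (of size $r$) and $S$ (of size $s$), I would first record the two structural facts that drive the whole argument. First, no single vertex dominates $K_{r,s}$ under the hypothesis $r>s>1$: a vertex of $R$ has degree $s$ and dominates only $s+1<r+s$ vertices, while a vertex of $S$ has degree $r$ and dominates $r+1<r+s$ vertices, so in fact $\gamma(K_{r,s})=2$. Second, any pair consisting of one vertex $u\in R$ and one vertex $w\in S$ is a dominating set, since $u$ covers all of $S$ and $w$ covers all of $R$.

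For the upper bound I would simply observe that any vertex partition of an $n$-vertex graph has at most $n$ blocks, so $C(K_{r,s})\leq r+s$ with no further work. For the lower bound I would exhibit the partition $\Omega$ into singletons and check that it is a coalition partition. Because $r>s>1$ forces $\Delta(G)=r<n-1$, the graph has no universal vertex, so no block of $\Omega$ can qualify as a singleton dominating set; each singleton must therefore participate in a coalition. Using the second structural fact, the singleton $\{u\}$ with $u\in R$ forms a coalition with any $\{w\}$, $w\in S$, since $\{u,w\}$ dominates while neither singleton does; the symmetric statement handles $u\in S$. As $r\geq 2$ and $s\geq 2$, every singleton has an available partner, so $\Omega$ is a valid coalition partition with $r+s$ blocks, giving $C(K_{r,s})\geq r+s$. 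Combining the two bounds yields $C(K_{r,s})=r+s$.

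There is essentially no serious obstacle here; the content is entirely in the two structural observations, and the only point demanding care is confirming that the hypothesis $r>s>1$ (as opposed to allowing $s=1$) is exactly what rules out a universal vertex and pins $\gamma(K_{r,s})$ at $2$. It is worth emphasizing in the write-up how sharply this contrasts with the strong coalition computation of the preceding Proposition \ref{thm:scoalition-bipartite}: there the degree condition $\deg(x)\leq\deg(y)$ in strong domination collapses the count to $SC(K_{r,s})=2$ when $r>s$, whereas for ordinary domination the opposite-side pairing mechanism survives and the finest partition remains valid, so the coalition number stays at the maximum possible value $r+s$.
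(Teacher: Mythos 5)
Your proof is correct, but note that the paper itself offers no argument to compare against: this statement appears only as an \emph{Observation} quoted from the literature (and the citation to \cite{7}, the 1977 Cockayne--Hedetniemi paper, is evidently a misattribution, since coalitions were only introduced in 2020; the result belongs to the Haynes et al.\ coalition papers). Your two-step argument --- the trivial upper bound $C(G)\leq n$, plus the check that the all-singletons partition is a coalition partition because $s>1$ rules out a universal vertex and any cross pair $\{u,w\}$ with $u\in R$, $w\in S$ dominates $K_{r,s}$ --- is the standard proof and is complete. One small refinement: the hypothesis $r>s$ is never actually used; only $r\geq s>1$ matters, which is consistent with the fact that $C(K_{r,r})=2r$ holds by the same singleton construction (and, via Lemma \ref{lemma:regular}, agrees with $SC(K_{r,r})=2r$ in Proposition \ref{thm:scoalition-bipartite}). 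Your closing contrast with the strong coalition case is apt: strong domination forces every strong dominating set to contain all of the small side when $r>s$, which is exactly what kills the singleton partition there.
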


By Proposition \ref{thm:scoalition-bipartite} and Observation \ref{thm:coalition-bipartite}, we have the following corollary.
\begin{corollary}
For every integer $m$ there exists a graph $G$ such that $C(G)-SC(G)=m$.
\end{corollary}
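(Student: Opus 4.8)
The plan is to realize each integer $m$ by an explicit graph, organising the argument by the sign and size of $m$. For every $m\ge 3$ one may simply invoke the two preceding results: taking the complete bipartite graph $K_{r,s}$ with $r>s>1$, Proposition~\ref{thm:scoalition-bipartite} gives $SC(K_{r,s})=2$ while Observation~\ref{thm:coalition-bipartite} gives $C(K_{r,s})=r+s$, so that $C(K_{r,s})-SC(K_{r,s})=(r+s)-2$. As $(r,s)$ ranges over all pairs with $r>s>1$, the quantity $(r+s)-2$ attains every integer value at least $3$, which settles this range.

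The value $m=0$ is given by the complete graph $K_n$: every vertex is universal, so each singleton is at once a dominating and a strong dominating set consisting of a single vertex of degree $n-1$, whence $C(K_n)=SC(K_n)=n$. The two remaining nonnegative values I would handle by small ad hoc graphs. For $m=1$ take $G$ to be the cycle $C_4$ with one pendant attached to a fixed cycle-vertex $x$; since $x$ has maximum degree and all its neighbours have strictly smaller degree, $x$ lies in every strong dominating set, which forces the pendant to share a block with $x$ in any strong coalition partition and yields $SC(G)=4$, whereas in the ordinary setting the pendant pairs with the antipodal cycle-vertex through a dominating (but non-strong-dominating) block, giving $C(G)=5$. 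For $m=2$ take the $2\times 3$ grid $P_2\square P_3$: one checks that its four degree-two corners cannot appear as singletons of a strong coalition partition, so $SC=4$ (no five-block strong coalition partition exists, but a four-block one does), while all six singletons do form ordinary coalitions, so $C=6$. In each case the verification is a direct finite check of the coalition relations among the blocks.

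The main obstacle is the range $m<0$, which the cited propositions do not reach: here one must exhibit graphs with $SC(G)>C(G)$. The structural room for this comes from the asymmetry between the two notions: a block of a partition is forbidden exactly when it is a (strong) dominating set that is not a single universal vertex, and since every strong dominating set is dominating but not conversely, a set that is dominating yet not strongly dominating is barred from an ordinary coalition partition while being admissible in a strong one. The approach is therefore to construct, for each $t\ge 1$, a graph whose optimal strong coalition partition is strictly finer than every ordinary one precisely because it legitimately uses $t$ such ``dominating but not strongly dominating'' blocks, with no ordinary coalition partition of equal cardinality available.

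The hard part will be to make this gain outweigh the competing effect, namely that forming a \emph{strong} coalition is harder than forming an ordinary one (a singleton admissible for $C$ need not be admissible for $SC$), an effect which in all the constructions above pushes $SC$ below $C$. Thus the crux is to design a family in which strong domination is flexible enough that few singletons are lost, yet ordinary coalition partitions are genuinely blocked by forbidden dominating blocks, so that $C$ is controlled from above while $SC$ is kept large. This is exactly the behaviour that the graphs realizing $m\ge 0$ do not exhibit, and it is where the argument must do its real work; isolating such a parametric family, together with a simultaneous lower bound on $SC$ and upper bound on $C$, is the step I expect to be decisive.
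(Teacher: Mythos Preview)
For $m\ge 3$ your argument is exactly the paper's: the paper's entire proof of this corollary is the single sentence citing Proposition~\ref{thm:scoalition-bipartite} and Observation~\ref{thm:coalition-bipartite}, i.e.\ the computation $C(K_{r,s})-SC(K_{r,s})=(r+s)-2$ for $r>s>1$, and since the smallest admissible pair is $(r,s)=(3,2)$ this yields only the values $m\ge 3$. The paper supplies no construction for $m\in\{0,1,2\}$ and none for $m<0$, so as literally stated (``for every integer $m$'') the corollary is not fully established there either.

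Your proposal therefore already goes beyond the paper. The examples you give for $m=0,1,2$ are correct --- the kite $C_4$ with a pendant indeed has $C=5$ and $SC=4$, and the $2\times 3$ grid has $C=6$ and $SC=4$ --- though your one-line justification for the kite is not quite the right mechanism: the pendant need not literally share a block with the degree-$3$ vertex $x$; rather, the point is that the singleton $\{u\}$ can only partner with the block containing $x$, and the smallest such union $\{u,x\}$ fails to strongly dominate the antipodal cycle vertex, forcing fewer than five blocks. For $m<0$ you have correctly isolated the structural asymmetry (a block that is dominating but not strongly dominating is forbidden in an ordinary coalition partition yet admissible in a strong one) but stop short of a construction. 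The paper does not address this range at all, so on the negative values your proposal and the paper are equally incomplete; there is no argument in the paper that you are missing.
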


Now, we determine the strong coalition numbers of all cycles. First, we recall the known result on the coalitions in cycles.

\begin{theorem}{\rm\cite{7}}\label{thm:coalition-cycle}
	For the cycle $C_n$,
\begin{displaymath}
C(C_n)= \left\{ \begin{array}{ll}
n & \textrm{~~~~ if ~~$n \leq 6$, }\\
\\
5 & \textrm{~~~~ if ~~$n = 7$,}\\
\\
6 & \textrm{~~~~ if ~~$n \geq 8$.}
\end{array} \right.
\end{displaymath}
\end{theorem}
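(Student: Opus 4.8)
The plan is to prove matching lower and upper bounds in the three regimes $n\le 6$, $n=7$, and $n\ge 8$, using the elementary fact that a set $S\subseteq V(C_n)$ dominates $C_n$ precisely when no three consecutive vertices lie outside $S$ (equivalently, every gap between consecutive chosen vertices has length at most two), so that $\gamma(C_n)=\lceil n/3\rceil$. Two consequences drive everything: a coalition $V_i\cup V_j$ must be a dominating set, so $|V_i|+|V_j|\ge \lceil n/3\rceil$; and since one vertex dominates only its three-vertex closed neighbourhood, two singleton parts together dominate at most $6$ vertices, hence for $n\ge 7$ no two singletons can form a coalition. I will also use the cycle specialisation of the degree bound (the analogue of Theorem~\ref{thm:Delta1} for ordinary coalitions): because $\Delta(C_n)=2$, every part lies in at most $3$ coalitions.

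For the lower bounds I would exhibit explicit partitions. When $n\le 6$ I take the all-singletons partition: for $n=3$ the cycle is $K_3$ and each singleton is a dominating set of a degree-$(n-1)$ vertex, while for $n\in\{4,5,6\}$ I pair each vertex with the vertex at distance $2$ (for $n=5$) or diametrically opposite (for $n=4,6$), and the gap criterion shows each such pair dominates; hence $C(C_n)=n$. For $n=7$ I would display a concrete $5$-part partition, e.g.\ $\{v_0\},\{v_1\},\{v_2\},\{v_3,v_6\},\{v_4,v_5\}$, and check via the gap criterion that no part dominates while every part has a coalition partner. For $n\ge 8$ I would give a $6$-part construction in which three disjoint pairs of parts each union to a dominating set, organising it by the residue of $n$ modulo $3$ (the case $3\mid n$ following from splitting the three colour classes of the period-$3$ dominating partition, the other residues by absorbing the one or two leftover vertices into existing parts while keeping the three dominating unions intact).

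The upper bounds are where the real work lies. For $n=7$, a partition into $7$ singletons fails because no two singletons dominate, and a partition into $6$ parts must consist of five singletons and one pair; each singleton can only be matched to the single pair $P$, but since $P$ is non-dominating its set of undominated vertices is nonempty and, to partner a singleton $\{v\}$, must lie inside $N[v]$, which holds for at most three vertices $v$. Thus the five singletons cannot all be served and $C(C_7)\le 5$. For $n\ge 8$ I would argue that no partition has $k\ge 7$ parts, using three tools: every coalition union has size $\ge\lceil n/3\rceil$; every part lies in at most $3$ coalitions; and if a part $V_i$ fails to dominate a vertex $x$, then $V_i\cap N[x]=\emptyset$ and every partner of $V_i$ must meet the three-vertex set $N[x]$, so all partners of $V_i$ lie among the at most three parts meeting $N[x]$. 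The size constraint bounds the number of parts large enough to anchor a coalition by roughly $6$, and combining this with the window constraint and the degree bound should confine $k$ to at most $6$.

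The main obstacle will be the upper bound $C(C_n)\le 6$ for large $n$: unlike the $n=7$ case, parts need not be singletons, so a pure counting of small parts does not suffice, and one must exploit the cyclic domination structure directly. I expect this to require a short case analysis according to the sizes of the parts (separating the few parts large enough to anchor a coalition from the remaining small parts, each of whose partners is pinned to a fixed three-vertex window) and possibly according to $n\bmod 3$, which is the step I would develop most carefully.
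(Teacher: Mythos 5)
The paper offers no proof of this theorem at all: it is imported as a known result (and the citation to \cite{7}, the 1977 Cockayne--Hedetniemi paper, appears to be a slip, since coalitions were introduced in \cite{10}; the cycle values come from that line of work). So your attempt can only be judged on its own terms, and there it has one fatal flaw and one large hole, both in the regime $n\ge 8$. The flaw is in your lower bound for $n\not\equiv 0\pmod 3$: you propose six parts forming ``three disjoint pairs of parts each union to a dominating set,'' absorbing the leftover vertices ``while keeping the three dominating unions intact.'' Three pairwise disjoint dominating sets covering $V(C_n)$ constitute a domatic partition of size $3$, and $d(C_n)=3$ if and only if $3\mid n$: three disjoint dominating sets force $3\lceil n/3\rceil\le n$, which fails for $n\equiv 1,2\pmod 3$. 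So no amount of absorbing leftovers can preserve three disjoint dominating unions; the construction you describe does not exist. The repair is to abandon the perfect-matching structure and let one part serve two partners. For example, in $C_{10}$ take $\{v_2\},\ \{v_0,v_5,v_8\},\ \{v_3\},\ \{v_7\},\ \{v_1,v_4,v_9\},\ \{v_6\}$: here $\{v_0,v_5,v_8\}$ forms coalitions with $\{v_2\}$ and with $\{v_3\}$, and $\{v_1,v_4,v_9\}$ forms coalitions with $\{v_7\}$ and with $\{v_6\}$ (each union has all cyclic gaps of length at most $2$), so all six parts are served. Constructions of this shape work for every residue of $n$ modulo $3$, but they must be exhibited; your plan as stated would fail.

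The hole is the upper bound $C(C_n)\le 6$ for $n\ge 8$, which you yourself identify as the crux but leave as a plan. The tools you list do not close it: every coalition contains a part of size at least $\lceil n/3\rceil/2$, so there are at most $2n/\lceil n/3\rceil\le 6$ such ``anchor'' parts, and each part has at most $\Delta(C_n)+1=3$ partners; but every other part only needs to partner with \emph{some} anchor, so this counting yields $k\le 6+3\cdot 6=24$, not $k\le 6$. Getting from a constant bound to $6$ requires exploiting the window structure much more finely (if $V_i$ fails to dominate $x_i$, all partners of $V_i$ meet the specific $3$-set $N[x_i]$, and one must analyze how few parts can simultaneously serve all the pinned windows), together with the parity/residue structure of minimum dominating sets of the cycle. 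Without that analysis the case $n\ge 8$ is simply unproven. By contrast, your treatment of $n\le 6$ (all-singleton partitions with distance-$2$ or antipodal pairings) and of $n=7$ (no two singletons can form a coalition since two closed neighbourhoods cover at most $6<7$ vertices; in a $6$-part partition the unique doubleton $P$ has a nonempty undominated set that must lie in $N[v]$ for each singleton partner $\{v\}$, which at most three singletons satisfy) is complete and correct.
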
 

In order to give the strong coalition numbers of all cycles, we state the following lemma.

\begin{lemma}\label{lemma:regular}
If  $G$ is a regular graph, then $SC(G)=C(G)$.
\end{lemma} 

\begin{proof}
Since in a $k$-regular graph, the degree of all vertices are $k$, then a dominating set is a strong dominating set and vice versa. So by finding a coalition partition, we have a strong coalition partition and vice versa. Therefore we have the result. 
\qed
\end{proof}

Now, by using Theorem \ref{thm:coalition-cycle} and Lemma \ref{lemma:regular}, we have the following result.

\begin{corollary}
    
	For the cycle $C_n$,
\begin{displaymath}
SC(C_n)= \left\{ \begin{array}{ll}
n & \textrm{~~~~ if ~~$n \leq 6$ }\\
\\
5 & \textrm{~~~~ if ~~$n = 7$}\\
\\
6 & \textrm{~~~~ if ~~$n \geq 8$}
\end{array} \right.
\end{displaymath}
\end{corollary}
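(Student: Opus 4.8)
The plan is to recognize that a cycle is a regular graph and then invoke the two results already in hand. First I would note that for every $n\ge 3$ the cycle $C_n$ is $2$-regular, so in particular it is a regular graph in the sense required by Lemma~\ref{lemma:regular}. That lemma rests on the observation that in a $k$-regular graph the degree condition $\deg(x)\le \deg(y)$ appearing in the definition of strong domination is automatically satisfied by every adjacent pair, so that the strong dominating sets are exactly the ordinary dominating sets; consequently strong coalitions coincide with coalitions, and hence $SC(C_n)=C(C_n)$.

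Next I would simply substitute the known coalition numbers of cycles. By Theorem~\ref{thm:coalition-cycle} we have $C(C_n)=n$ for $n\le 6$, $C(C_7)=5$, and $C(C_n)=6$ for $n\ge 8$. Combining these values with the equality $SC(C_n)=C(C_n)$ furnished by Lemma~\ref{lemma:regular} reproduces the three cases of the claimed formula verbatim, so the corollary follows by direct substitution.

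Since both ingredients are already established, there is essentially no obstacle remaining in the argument itself; the entire content has been front-loaded into Lemma~\ref{lemma:regular} and the cited Theorem~\ref{thm:coalition-cycle}. If anything, the only point worth a moment's care is confirming that the regularity hypothesis of Lemma~\ref{lemma:regular} genuinely applies---this is immediate for cycles, since every vertex of $C_n$ has degree exactly $2$---and noting that no vertex of $C_n$ has degree $n-1$ once $n\ge 4$, so the degenerate singleton--universal-vertex clause in the definition of a (strong) coalition partition never interferes. With these trivial checks in place, the result is immediate.
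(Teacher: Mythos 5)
Your proposal is correct and follows exactly the paper's own route: the paper derives this corollary by combining Lemma~\ref{lemma:regular} (that $SC(G)=C(G)$ for regular graphs) with Theorem~\ref{thm:coalition-cycle} giving $C(C_n)$, which is precisely your argument. The extra checks you mention (regularity of $C_n$, the universal-vertex clause) are harmless but not needed beyond what the lemma already covers.
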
 

Next we obtain the strong coalition numbers of paths.

\begin{theorem}
	For the path $P_n$,

\begin{displaymath}
SC(P_n)= \left\{ \begin{array}{ll}

1 & \textrm{~~~~ if ~~$n=1$}\\
\\
2 & \textrm{~~~~ if ~~$n=2$}\\
\\
0 & \textrm{~~~~ if ~~$n=3$}\\
\\
4 & \textrm{~~~~ if ~~$ 4 \leq n\leq 7$}\\
\\
5 & \textrm{~~~~ if ~~ $8 \leq n\leq 11$}\\
\\
6 & \textrm{~~~~ if ~~$n \geq 12$}
\end{array} \right.
\end{displaymath}

\end{theorem}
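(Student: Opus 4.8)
The plan is to dispose of the three small orders $n\le 3$ directly and then prove matching lower and upper bounds on each of the intervals $[4,7]$, $[8,11]$ and $[12,\infty)$. For $n=1$ Lemma~\ref{l1} gives $SC(P_1)=1$; for $n=2$ the path is $K_2$, which is complete, so $SC(P_2)=2$ by Corollary~\ref{cor1} and the remark following it; and for $n=3$ the middle vertex has degree $2=n-1$ while $P_3\ne K_3$, so $P_3\in\mathcal F$ and Proposition~\ref{1} yields $SC(P_3)=0$. For every $n\ge 4$ the path has $\Delta(P_n)=2$ and $\delta(P_n)=1$, so Theorem~\ref{thm:Delta2} immediately gives the universal ceiling $SC(P_n)\le 6$.

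Before attacking the intervals I would record the structure of strong domination on a path. Writing $P_n=v_1v_2\cdots v_n$, the two endpoints have degree $1$, so an endpoint can never strongly dominate its degree-$2$ neighbour; consequently every strong dominating set $S$ must satisfy $\{v_1,v_2\}\cap S\ne\emptyset$ and $\{v_2,v_3\}\cap S\ne\emptyset$ (to handle $v_1$ and $v_2$), together with the symmetric conditions $\{v_{n-1},v_n\}\cap S\ne\emptyset$ and $\{v_{n-2},v_{n-1}\}\cap S\ne\emptyset$ at the right end. A standard argument then gives $\gamma_{st}(P_n)=\lceil n/3\rceil$ (equal to the ordinary domination number), a value that controls how small the sets of a strong coalition partition can be, since the union of any coalition contains at least $\gamma_{st}(P_n)$ vertices.

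For the lower bounds I would exhibit explicit partitions of the claimed sizes. On $[4,7]$ one has $d_{st}(P_n)=2$, so Theorem~\ref{e1} already yields $SC(P_n)\ge 4$. On $[8,11]$ one builds a five-set partition around two overlapping ``hub'' sets: for $P_8$ the partition $\{\{v_2,v_7\},\{v_3,v_6\},\{v_4\},\{v_5\},\{v_1,v_8\}\}$ works, the point being that placing $v_2$ and $v_{n-1}$ in a common hub lets the several interior ``holes'' of each hub be patched by distinct small sets. For $n\ge 12$ the same hub-and-spoke idea, now with enough interior room, produces six sets and meets the ceiling. The thresholds $8$ and $12$ reflect that each coalition beyond the basic four costs roughly four extra path-vertices: an interior hole of a hub (of degree $2$, hence patchable from either side) together with the vertices needed to patch it without disturbing the period-$3$ domination pattern or the endpoint conditions.

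The decisive and most delicate part is the matching upper bound at the boundary orders, namely $SC(P_7)\le 4$ and $SC(P_{11})\le 5$. Here I would work in the coalition graph $H$ whose vertices are the parts of a partition and whose edges are the coalitions. Writing $P_i$ and $Q_j$ for the parts containing $v_i$ and $v_{n-j+1}$, the endpoint conditions say that each of $\{P_1,P_2\}$, $\{P_2,P_3\}$, $\{Q_1,Q_2\}$, $\{Q_2,Q_3\}$ is a vertex cover of $H$; combined with Theorem~\ref{thm:Delta1} (every part lies in at most $\Delta+1=3$ coalitions) this forces every part to coincide with, or be a neighbour of, a small core of special parts. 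When the endpoint-parts $\{P_1,P_2\}$ and $\{Q_1,Q_2\}$ are disjoint, every edge of $H$ runs between these two pairs, so all parts lie among these four and $SC\le 4$ outright; exceeding this requires a part carrying both $v_2$ and $v_{n-1}$ to serve as a common partner. The hard step is to show that for $n=7$ (resp. $n=11$) the extra coalition demanded by a fifth (resp. sixth) part cannot be realised: using $\gamma_{st}(P_n)=\lceil n/3\rceil$ and a count of how many disjoint patchable holes a hub can carry on so short a path, one argues there is simply not enough room. Establishing these two tight bounds, rather than the constructions, is where the real work lies.
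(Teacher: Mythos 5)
Your handling of $n\le 3$, the ceiling $SC(P_n)\le 6$ from Theorem \ref{thm:Delta2}, the lower bound $SC(P_n)\ge 4$ on $[4,7]$ via $d_{st}(P_n)=2$ and Theorem \ref{e1}, and your five-set partition of $P_8$ are all correct, and the vertex-cover framing (every coalition's union must meet $\{v_1,v_2\}$ and $\{v_{n-1},v_n\}$, so when the corresponding pairs of parts are disjoint every part lies among those four) is a clean observation the paper does not use. But the proposal has a genuine gap: the matching upper bounds are never actually proved. You describe a strategy ("a count of how many disjoint patchable holes a hub can carry") and explicitly defer it as "where the real work lies" --- yet this is precisely the content of the theorem. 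The paper's proof consists of an extensive case analysis: for each order it enumerates the possible size profiles of a hypothetical partition of order $5$ or $6$ (e.g.\ for $P_{11}$, profiles such as $|V_1|=4, |V_2|=3$ plus four singletons), and uses the very small number of minimum strong dominating sets together with Theorem \ref{thm:Delta1} to rule out each profile. Without carrying out some version of this, the theorem is not established.

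Moreover, even if you completed the boundary cases, your plan proves the upper bound only at $n=7$ and $n=11$. That suffices only if $SC(P_n)$ is non-decreasing in $n$, which is nowhere established and cannot be assumed: coalition-type parameters of paths and cycles are not monotone in the order (by Theorem \ref{thm:coalition-cycle}, $C(C_6)=6>5=C(C_7)$). So $n=5$ and $n=6$ each need their own argument that $SC\le 4$ (the paper shows, for instance, that any order-$5$ partition of $P_6$ fails because the sets containing $v_4$ and $v_6$ have no available coalition partners), and $n=8,9,10$ each need $SC\le 5$. Similarly, your lower-bound constructions for $n=9,10,11$ and for $n\ge 12$ are only gestured at; the paper exhibits explicit partitions in every one of these cases (for $n\ge 12$, split by parity, with two large sets and four singletons), and these exhibits are part of what must be supplied.
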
 

\begin{proof}
Let $V(P_n)=\{v_1,v_2,\ldots,v_n\}$ such that $\deg(v_1)=\deg(v_n)=1$ and $\deg(v_i)=2$ for $2\leq i\leq n-1$ and $v_{i-1}$ and $v_i$ are adjacent, for $2\leq i\leq n$. By Lemma \ref{l1}, we have $SC(P_1)=1$. If $n=2$,
 then Proposition \ref{thm:scoalition-bipartite} gives the result. By Proposition \ref{1}, we have $SC(P_3)=0$. Now suppose that $n=4$. Since no single vertex strongly dominates $P_4$ but any pair of non-adjacent vertices strongly dominate $P_4$, the
partition containing each vertex in a singleton set is a
$SC(G)$-partition of $P_4 $ of order $4$. Now assume that $n=5$ and let $\Omega=\{\{v_1\},\{v_2\},\{v_3\},\{v_4\},\{v_5\}\}$ be a $SC(G)$-partition of $P_5$ into singleton sets. Since the graph $P_5$ has a unique  strong dominating set of order two, i.e., $\{v_2,v_4\}$, then each set containing vertices $v_1$, $v_3$, and $v_5$ is not in a strong coalition with any other set of $\Omega$. So only the sets that contain $v_2$ and $v_4$ are classified as singleton sets.
Thus, in order for
$\Omega$ to be a $SC(G)$-partition of $P_5$, $\Omega$ must contain at least one
non-singleton set. So, $SC(P_5)\leq 4$. The partition
 $\{V_1=\{v_3, v_5\},V_2=\{v_1\},V_3=\{v_2\},V_4=\{v_4\}\}$ is a $SC(G)$-partition of $P_5$, in which $V_1$ forms a strong coalition with each of $V_2$ and $V_3$, and $V_4$ forms a strong coalition with $V_3$. Now, we consider $n=6$. Let $\Omega=\{\{v_1\},\{v_2\},\{v_3\},\{v_4\},\{v_5\},\{v_6\}\}$ be a $SC(G)$-partition of $P_6$. Similar to  $P_5$, any strong coalition partition of $P_6$ must contain at
least one non-singleton set. Therefore, $SC(P_6)\leq 5$. Now, we show that $SC(P_6)\ne 5$. Let $\Omega=\{V_1,V_2,V_3,V_4,V_5\}$ be a $SC(G)$-partition of $P_6$. Suppose that $V_1=\{v_1\}$. It is clear that $V_1$ can not form a strong coalition with a singleton set. It follows that a strong coalition partner of $V_1$ must have at least two elements. One can easily check that the only option is $V_2=\{v_3,v_5\}$. If other sets be singleton sets, none of them can form a strong coalition with the singleton set consisting of $v_4$. The same argument holds for the singleton set consisting of $v_6$. So we need at least two sets with two vertices or one set with at least 3 vertices. Therefore in this case $SC(P_6)\ne 5$. Now assume that $|V_1|=2$ and $|V_i|=1$, for $2\leq i \leq 5$. Let $V_1=\{v_1,v_j\}$ such that $j\in\{2,3,4,5,6\}$. If $i\ne 6$, then the singleton set consisting of $v_6$ can only form a strong coalition with the doubleton set consisting of $\{v_2,v_4\}$. If $V_1=\{v_1,v_6\}$, then $V_1$ must be in a strong coalition with a set with size at least two. So in all cases, there is no $SC(G)$-partition of $P_6$ of order 5. One can easily check that the partition
 $\{V_1=\{v_1, v_6\},V_2=\{v_3,v_4\},V_3=\{v_2\},V_4=\{v_5\}\}$ is a $SC(G)$-partition of $P_6$.  Next assume that $n=7$. By Theorem \ref{thm:Delta2}, we have $SC(P_7)\leq 6$. We show that $SC(P_7)\ne 6$. Let $\Omega$ be a $SC(G)$-partition of $P_7$. Note that $\Omega$ consists of a set, say $V_1$, of cardinality $2$ and five singleton
sets. Since $\gamma_{st}(P_7)=3$, each singleton set must form a strong coalition with $V_1$, and by Theorem \ref{thm:Delta1}, $V_1$ has
at most three strong coalition partners, a contradiction. Therefore, $SC(P_7)\leq 5$. With similar method we see that  $SC(P_7)\ne 5$. The partition
 $\{V_1=\{v_1, v_3\},V_2=\{v_2, v_4\},V_3=\{v_6\}, V_4=\{v_5, v_7\}\}$ is a $SC(G)$-partition of $P_7$. Next suppose that $n=8$. By Theorem \ref{thm:Delta2}, we know that $SC(P_8)\leq 6$. We first show that $SC(P_8)\ne 6$. Let $\Omega$ be a $SC(G)$-partition of $P_8$. Now, we consider two cases.\medskip
 
 \nt {\bf Case 1.} Assume that $\Omega=\{V_1,V_2,V_3,V_4,V_5,V_6\}$  consists of a set of cardinality 3 and five singleton
sets. Since $\gamma_{st}(P_8)=3$, each singleton set must be a strong coalition of $V_1$, and by Theorem \ref{thm:Delta1}, $V_1$ has
at most three strong coalition partners, a contradiction. So, $SC(P_8)\leq 5$.

 \nt {\bf Case 2.} Let $|V_1|=|V_2|=2$ and $|V_i|=1$ for $3\leq i \leq 6$. Since $\gamma_{st}(P_8)=3$, each singleton set must form a strong coalition with a set of cardinality 2. Due to the graph $P_8$ has only two strong dominating sets of order 3, which consist of vertices $v_2$, $v_4$, $v_5$, and $v_7$, then each set containing vertices $v_1$, $v_3$,
$v_6$, and $v_8$ does not appear in any strong coalition of order 3. Without loss of generality, we may assume that $V_1=\{v_2,v_7\}$, $V_3=\{v_4\}$, and $V_4=\{v_5\}$ such that $V_1 \cup V_3$ and $V_1 \cup V_4$ are strong dominating sets of order 3. Therefore, each of $V_5$ and $V_6$ is not in a strong coalition with $V_1$ or $V_2$, a contradiction. Hence, $SC(P_8)\leq 5$.

The partition $\{V_1=\{v_2, v_7\},V_2=\{v_1, v_3, v_6\},V_3=\{v_4\},V_4= \{v_5\}, V_5=\{v_8\}\}$ is a $SC(G)$-partition of $P_8$. Thus, $SC(P_8)=5$. 

Now, we consider $n=9$. By Theorem \ref{thm:Delta2}, we have $SC(P_9)\leq 6$. We first show that $SC(P_9)\ne 6$. Let $\Omega$ be a $SC(G)$-partition of $P_9$. We consider the following cases.\medskip

\nt {\bf Case 1.} $|V_1|=4$ and $|V_i|=1$ for $2\leq i \leq 6$. Since $\gamma_{st}(P_9)=3$, each singleton set must be a strong coalition of $V_1$, and by Theorem \ref{thm:Delta1}, $V_1$ has
at most three strong coalition partners, a contradiction. Hence, $SC(P_9)\leq 5$.

 \nt {\bf Case 2.} Assume that $|V_1|=3,|V_2|=2$, and $|V_i|=1$ for $3\leq i \leq 6$. Since $\gamma_{st}(P_9)=3$, no two singleton sets in $\Omega$ can form strong coalitions. Since $P_9$ has a unique strong dominating set of cardinality 3, which consist of vertices $v_2$, $v_5$, and $v_8$, then each set containing vertices $v_1$, $v_3$,
$v_4$, $v_6$, $v_7$, and $v_9$ does not appear in any strong coalition of order 3. Without loss of generality, we may assume that $V_2=\{v_2,v_5\}$ and $V_3=\{v_8\}$ such that $V_2 \cup V_3$ is a strong dominating set of order 3. Thus, each of $V_4, V_5$, and $V_6$ must be in a strong coalition with $V_1$, which is impossible, as $V_1 \cup V_4$, $V_1 \cup V_5$, and $V_1 \cup V_6$  have no three vertices in common. So, $SC(P_9)\leq 5$.

 \nt {\bf Case 3.} Let $|V_1|=|V_2|=|V_3|=2$ and $|V_i|=1$ for $4\leq i \leq 6$. Since $\gamma_{st}(P_9)=3$, each
singleton set must be a strong coalition partner of a set of cardinality 2, which is impossible, as $P_9$ has
a unique strong dominating set of cardinality 3.

The partition $\{V_1=\{v_1, v_3, v_5\},V_2=\{v_2, v_4, v_9\},V_3=\{v_6\},V_4= \{v_7\}, V_5=\{v_8\}\}$ is a $SC(G)$-partition of $P_9$. Therefore, $SC(P_9)=5$. 

Now let $n=10$. By Theorem \ref{thm:Delta2}, we have $SC(P_{10})\leq 6$. We shall show that $SC(P_{10})\ne 6$. Let $\Omega$ be a $SC(G)$-partition of $P_{10}$. We consider the following cases.

\nt {\bf Case 1.} $|V_1|=5$ and $|V_i|=1$ for $2\leq i \leq 6$. Since $\gamma_{st}(P_{10})=4$, no two singleton sets in $\Omega$ can produce strong coalitions. Then each singleton set must produce a strong coalition with $V_1$, contradicting Theorem \ref{thm:Delta1}. So, $SC(P_{10})\leq 5$.

\nt {\bf Case 2.} $|V_1|=4$, $|V_2|=2$, and $|V_i|=1$ for $3\leq i \leq 6$. Since $\gamma_{st}(P_{10})=4$, no two singleton sets in $\Omega$ can form strong coalitions. In addition, no singleton set forms a strong coalition with $V_2$. Therefore each of $V_j$ for $2\leq j \leq 6$ must be in a strong coalition with $V_1$, contradicting Theorem \ref{thm:Delta1}. Hence, $SC(P_{10})\leq 5$. 

 \nt {\bf Case 3.} $|V_1|=3,|V_2|=3$, and $|V_i|=1$ for $3\leq i \leq 6$. Every set of $\Omega$ that does not contain either $v_1$ or $v_2$ must be in a strong coalition with a set containing either $v_1$ or $v_2$. Without loss of generality, let $v_1 \in V_3$. Since no two singleton sets in $\Omega$ can form strong coalitions, then each of $V_4$, $V_5$, and $V_6$ cannot be a strong coalition with $V_3$. It follows that $v_1 \in V_1$ or $v_1\in V_2$.
 Due to the vertex $v_1$ can only form a strong coalition of order 4 with vertices $v_3$, $v_6$, and $v_9$, without loss of generality, we may assume that $V_2 \cup V_3=\{v_1,v_3,v_6,v_9\}$.  It can be seen that each of $V_4$, $V_5$, and $V_6$ must be in a strong coalition with $V_1$, which is impossible, as the pairs $V_1 \cup V_4$, $V_1 \cup V_5$, and $V_1 \cup V_6$ that comprise the remaining vertices $v_2,v_4,v_5,v_7,v_8,v_9$, and $v_{10}$ have no three vertices in common. So, $SC(P_{10})\leq 5$. 

\nt {\bf Case 4.} $|V_1|=3,|V_2|=|V_3|=2$, and $|V_i|=1$ for $4\leq i \leq 6$. Since $\gamma_{st}(P_{10})=4$, no two singleton sets can produce a strong coalition.
Furthermore, no singleton set forms a strong coalition with $V_2$ and $V_3$. Now, we consider the following subcases.\medskip

\nt {\bf Subcase (i)}. Let $V_2$ and $V_3$ do not form a strong coalition. Thus each of $V_j$ for $2\leq j \leq 6$ must form a strong coalition with $V_1$, contradicting Theorem \ref{thm:Delta1}. Hence, $SC(P_{10})\leq 5$. 

\nt {\bf Subcase (ii)}. Suppose that $V_2$ and $V_3$ form a strong coalition. Thus, each singleton set must form a strong coalition with $V_1$, which is impossible, as three minimum strong dominating sets have no three vertices in common. So, $SC(P_{10})\leq 5$. 

\nt {\bf Case 5.} $|V_1|=|V_2|=|V_3|=2$ and $|V_i|=1$ for $4\leq i \leq 6$. It can be seen that no union of any two sets has cardinality
4. Consequently, there is no $SC(G)$-partition of order 6. Then, $SC(P_{10})\leq 5$.

The partition $\{V_1=\{v_1, v_3, v_6\},V_2=\{v_2, v_7, v_8, v_{10}\}, V_3=\{v_4\}, V_4= \{v_5\}, V_5=\{v_9\}\}$ is a $SC(G)$-partition of $P_{10}$. Thus, $SC(P_{10})=5$. 
    
Next assume that $n=11$. By Theorem \ref{thm:Delta2}, we have $SC(P_{11})\leq 6$. Now, we show that $SC(P_{11})\ne 6$. Let $\Omega$ be a $SC(G)$-partition of $P_{11}$. We consider the following cases.

\nt {\bf Case 1.}$|V_1|=6$ and $|V_i|=1$ for $2\leq i \leq 6$. Since $\gamma_{st}(P_{11})=4$, no two singleton sets in $\Omega$ can form strong coalitions. Then each singleton set must produce a strong coalition with $V_1$, which contradicts Theorem \ref{thm:Delta1}. Hence, $SC(P_{11})\leq 5$.

\nt {\bf Case 2.}$|V_1|=5$, $|V_2|=2$, and $|V_i|=1$ for $3\leq i \leq 6$. Since $\gamma_{st}(P_{11})=4$, each of $|V_j|=1$ for $2\leq j \leq 6$ must be in a strong coalition with $V_1$, and by Theorem \ref{thm:Delta1}, $V_1$ is in
at most three strong coalition partners, a contradiction. So, $SC(P_{11})\leq 5$.

 \nt {\bf Case 3.} $|V_1|=4,|V_2|=3$, and $|V_i|=1$ for $3\leq i \leq 6$. Since $\gamma_{st}(P_{11})=4$, each singleton set must produce a strong coalition with $V_1$ or $V_2$. It can be seen that $P_{11}$
  possesses only three strong coalitions of order 4 such as $A=\{v_2,v_5,v_8,v_{10}\}$, $B=\{v_2,v_5,v_7,v_{10}\}$, and $C=\{v_2,v_4,v_7,v_{10}\}$. Now, we consider the following subcases.\medskip

\nt {\bf Subcase (i)}. Without loss of generality, assume that $V_2  \cup V_3=\{v_2,v_5,v_7,v_{10}\}$ is a strong coalition of order 4. Note that each of $V_4$, $V_5$, and $V_6$ is in a strong coalition with $V_1$. It holds that $\bigcap_{k=4}^{6}|V_1 \cup V_k|=4$, which is impossible, as the pairs $V_1 \cup V_4$, $V_1 \cup V_5$, and $V_1 \cup V_6$ comprising the remaining vertices $v_1,v_3,v_4,v_6,v_8$, $v_9$ and $v_{11}$ have no four vertices in common. So, $SC(P_{11})\leq 5$. 

\nt {\bf Subcase (ii)}. Let $V_2 \cup V_3=\{v_2,v_5,v_8,v_{10}\}$.  Assume that each of $V_4$, $V_5$, $V_6$ can be a strong coalition with $V_1$. Then $\bigcap_{k=4}^{6}|V_1 \cup V_k|=4$, which is impossible, as the pairs $V_1 \cup V_4$, $V_1 \cup V_5$, and $V_1 \cup V_6$ containing the remaining vertices have no four vertices in common. Hence, $SC(P_{11})\leq 5$.

\nt {\bf Subcase (iii)}. Let $V_2 \cup V_3=\{v_2,v_4,v_7,v_{10}\}$.  Suppose that each of $V_4$, $V_5$, $V_6$ can be a strong coalition with $V_1$. It follows that $\bigcap_{k=4}^{6}|V_1 \cup V_k|=4$, which is impossible, as the pairs $V_1 \cup V_4$, $V_1 \cup V_5$, and $V_1 \cup V_6$ containing the remaining vertices have no four vertices in common. Thus, $SC(P_{11})\leq 5$.

\nt {\bf Subcase (iv)}. Let $V_2=\{v_2,v_5,v_{10}\}$, $V_3=\{v_7\}$, and $V_4=\{v_8\}$ such that $V_2 \cup V_3$ and $V_2 \cup V_4$ are strong coalitions of order 4 of $P_{11}$. In addition, each of $V_5$ and $V_6$ can be in a strong coalition with $V_1$. Then $\bigcap_{k=5}^{6}|V_1 \cup V_k|=4$, which is impossible, as the pairs $V_1 \cup V_5$ and $V_1 \cup V_6$ comprising the remaining vertices $v_1,v_3,v_4,v_6,v_9$, and $v_{11}$ have no four vertices in common. So, $SC(P_{11})\leq 5$.

\nt {\bf Subcase (v)}. Assume that $V_2=\{v_2,v_7,v_{10}\}$, $V_3=\{v_4\}$, and $V_4=\{v_5\}$ such that $V_2 \cup V_3$ and $V_2 \cup V_4$ are strong coalitions of order 4 of $P_{11}$. As before, each of $V_5$ and $V_6$ can be in a strong coalition with $V_1$. It holds that $\bigcap_{k=5}^{6}|V_1 \cup V_k|=4$, which is impossible, as the pairs $V_1 \cup V_5$ and $V_1 \cup V_6$ containing the remaining vertices have no four vertices in common. Thus, $SC(P_{11})\leq 5$.

\nt {\bf Subcase (vi)}. Without loss of generality, assume that each of $V_3$, $V_4$, and $V_5$ form a strong coalition with $V_2$. It is clear that $V_6$ can form a strong coalition with $V_1$. Then $\bigcap_{k=3}^{5}|V_2 \cup V_k|=3$, which is impossible, as there are two sets such as $A$ and $C$ such that $|A \cap C|=2$.

\nt {\bf Case 4.} $|V_1|=4,|V_2|=|V_3|=2$, and $|V_i|=1$ for $4\leq i \leq 6$. Since $\gamma_{st}(P_{11})=4$, no two singleton sets can produce a strong coalition.
Furthermore, no singleton set forms a strong coalition with $V_2$ and $V_3$. So each of $V_j$ for $2\leq j \leq 6$ must be in a strong coalition with $V_1$, which contradicts Theorem \ref{thm:Delta1}. Hence, $SC(P_{11})\leq 5$.

\nt {\bf Case 5.} $|V_1|=|V_2|=3$, $|V_3|=2$, and $|V_i|=1$ for $4\leq i \leq 6$. From our previous discussions, each of $V_j$ for $3\leq j \leq 6$ must produce a strong coalition with $V_1$ or $V_2$. Since the graph $P_{11}$
 has only two strong coalitions of order 4, a singleton set cannot form a strong coalition with $V_1$ or $V_2$, a contradiction. Consequently, there is no $SC(G)$-partition of order 6. Then, $SC(P_{11})\leq 5$.

\nt {\bf Case 6.} $|V_1|=3$, $|V_i|=2$ for $2\leq i \leq 4$, and $|V_5|=|V_6|=1$. Since $\gamma_{st}(P_{11})=4$, as before, each of $V_j$ for $2\leq j \leq 6$ must be in a strong coalition with $V_1$. By Theorem \ref{thm:Delta1}, $V_1$ has 
at most three strong coalition partners, a contradiction. Hence, $SC(P_{11})\leq 5$. 

\nt {\bf Case 7.} $|V_i|=2$ for $1\leq i\leq 5$ and $|V_6|=1$. Then, $V_6$ together with any other set cannot have four vertices and hence cannot form a strong coalition. Consequently, there is no $SC(G)$-partition of order 6. Then, $SC(P_{11})\leq 5$.

The partition $\{V_1=\{v_1, v_3, v_6, v_9\},V_2=\{v_2, v_5, v_{10}\}, V_3=\{v_7\}, V_4=\{v_8\}, V_5=\{v_{11}\}\}$ is a $SC(G)$-partition of $P_{11}$. Thus, $SC(P_{11})=5$. 

Finally, let $n\geq 12$. By Theorem \ref{thm:Delta2}, for any path $P_n$ we have $SC(P_{n})\leq 6$. Now, we find a maximal strong coalition partition of order 6 for $P_n$, where $n\geq 12$. Now, We consider the following cases.
\begin{itemize}
\item[(i)]
$n=2k$ for $k\geq 6$. In this case, let 

$\Omega_1=\{v_1,v_3,v_6,v_{11},v_{13},v_{15},\ldots,v_{2k-1}\}$,
$\Omega_2=\{v_2,v_7,v_{10},v_{12},v_{14},v_{16},\ldots,v_{2k}\}$, $\Omega_3=\{v_4\}$, $\Omega_4=\{v_5\}$, $\Omega_5=\{v_8\}$ and $\Omega_6=\{v_9\}$.
Now, $\Omega_1$ forms strong coalitions with $\Omega_5$ and $\Omega_6$, and $\Omega_2$ forms strong coalitions with $\Omega_3$ and $\Omega_4$.
\item[(ii)]
$n=2k+1$ for $k\geq 6$. In this case, let 

$\Omega'_1=\{v_1,v_3,v_6,v_{11},v_{13},v_{15},\ldots,v_{2k+1}\}$,
$\Omega'_2=\{v_2,v_7,v_{10},v_{12},v_{14},v_{16},\ldots,v_{2k}\}$, $\Omega'_3=\{v_4\}$, $\Omega'_4=\{v_5\}$, $\Omega'_5=\{v_8\}$ and $\Omega'_6=\{v_9\}$.
Now, $\Omega'_1$ forms strong coalitions with $\Omega'_5$ and $\Omega'_6$, and $\Omega'_2$ forms strong coalitions with $\Omega'_3$ and $\Omega'_4$.
\end{itemize}
Therefore the proof is complete.
\qed
\end{proof}

\section{Conclusion}
This paper introduces the concept of strong coalition in graphs. We have investigated the existence of strong coalition partitions and presented some lower bound and upper bounds for the strong coalition number. We presented a sufficient condition for graphs $G$ of order at least $4$ with $SC(G)=|V(G)|$. Also, we have determined the precise values of ${SC}(P_n)$, ${SC}(K_{r,s})$, and ${SC}(C_n)$. We have outlined some unresolved problems and potential research directions related to the strong  coalition number of graphs. However, there is still much work to be done in this area.
\begin{enumerate}
	\item What is the strong  coalition number of  graph operations, such as corona, Cartesian product, join, lexicographic, and so on?
	
	\item Which graphs have small and large strong coalition number?
	
	\item What is the effects on ${SC}(G)$ when $G$ is modified by operations on vertex and edge of $G$?

	\item Study Nordhaus and Gaddum lower and upper bounds on the sum and the product
	of the strong coalition number of a graph and its complement.

	\item   Similar to the coalition graph of $G$, it is natural to define and study the strong coalition graph of $G$ for strong coalition partition $\Omega$, which can be denoted by $SCG(G, \Omega)$, and is defined as follows. Corresponding to any strong coalition partition $\Omega=\{V_1,V_2,\ldots, V_k\}$ in a graph $G$, a  {\em strong coalition graph} $SCG(G, \Omega)$ is  associated in which there is a one-to-one correspondence between the  vertices of $SCG(G, \Omega)$  and the sets $V_1, V_2,...,V_k$ of $\Omega$,  and two vertices of $SCG(G, \Omega)$  are adjacent if and only if their corresponding sets in $\Omega$ form a strong coalition.

\end{enumerate}

\medskip

\noindent{\bf Acknowledgement.} 
The work of Hamidreza Golmohammadi was supported by the
Mathematical Center in Akademgorodok, under agreement No. 075-15-2022-281 with
the Ministry of Science and High Education of the Russian Federation.

\end{document}